\newcommand{\hW}{\hat{W}}
\newcommand{\vol}{\mathrm{Vol}}
\newcommand{\cW}{\mathcal{W}}
\newcommand{\Fuk}{\mathcal{F}}
\newcommand{\Rad}{\mathrm{Rad}}
\newtheorem{thm}{Theorem}[section]
\newtheorem{lem}[thm]{Lemma}
\newtheorem{prop}[thm]{Proposition}
\newtheorem{definition}[thm]{Definition}
\newtheorem{remark}[thm]{Remark}
\newtheorem{assumption}[thm]{Assumption}
\numberwithin{equation}{section}
\begin{document}

\title[Categorical and topological entropies]{A comparison of categorical and topological entropies on Weinstein manifolds}

\author{Hanwool Bae}
\address[Hanwool Bae]{Center for Quantum Structures in Modules and Spaces, Seoul National University, Seoul, South Korea}
\email{hanwoolb@gmail.com}

\author{Sangjin Lee}
\address[Sangjin Lee]{Center for Geometry and Physics\\ Institute for Basic Science (IBS)\\ Pohang 37673, Korea}
\email{sangjinlee@kias.re.kr(Primary), sangjinlee@ibs.re.kr}

\begin{abstract}
	Let $W$ be a symplectic manifold, and let $\phi:W \to W$ be a symplectic automorphism. 
	This automorphism induces an auto-equivalence $\Phi$ defined on the Fukaya category of $W$.
	In this paper, we prove that the categorical entropy of $\Phi$ provides a lower bound for the topological entropy of $\phi$, where $W$ is a Weinstein manifold and $\phi$ is compactly supported. 
	Furthermore, motivated by \cite{Cineli-Erman-Ginzburg}, we propose a conjecture that generalizes the result of \cite{Newhouse, Przytycki, Yomdin}. 
\end{abstract}

\maketitle

\section{Introduction}
\label{section introduction}
\subsection{Introduction}
\label{subsection introduction}
Let \( W \) be a Weinstein manifold equipped with a compactly supported, exact symplectic automorphism \( \phi \). 
The pair \( (W, \phi) \) forms a discrete dynamical system. 
In this paper, we compare two invariants of this dynamical system. 

The first invariant is the {\em topological entropy}. 
This concept was introduced in the 1960s for compact spaces and extended to noncompact spaces in the 1970s. 
See \cite{Adler-Konheim-McAndrew, Hofer72, Hofer74}. 
For readers unfamiliar with dynamical systems, the books \cite{Hasselblatt-Katok95, Robinson98, Brin-Stuck02} are excellent references. 
Let \( h_{top}(\phi) \) denote the classical invariant associated with \( (W, \phi) \).

Recently, \cite{Dimitrov-Haiden-Katzarkov-Kontsevich} introduced the concept of {\em categorical entropy} for a pair \( (\mathcal{C}, \Phi) \), where \( \mathcal{C} \) is a triangulated category and \( \Phi: \mathcal{C} \to \mathcal{C} \) is an auto-equivalence. 
In our setting, the dynamical system \( (W, \phi) \) induces a categorical dynamical system \( (\mathcal{C}, \Phi) \) of symplectic-topological nature. 
Specifically, 
\begin{itemize}
	\item the (triangulated closure of the) wrapped Fukaya category \( \mathcal{W}(W) \) of a Weinstein manifold \( W \) is a triangulated category, and  
	\item the exact symplectic automorphism \( \phi \) induces an auto-equivalence \( \Phi: \mathcal{W}(W) \to \mathcal{W}(W) \).
\end{itemize}

This construction gives rise to the second invariant of the symplectic dynamical system \( (W, \phi) \): the categorical entropy of \( (\mathcal{W}(W), \Phi) \). Let \( h_{cat}(\phi) \) denote this invariant, which we refer to as the categorical entropy of \( \phi \). 

Since both entropies are invariants of the same dynamical system, it is natural to compare them. 
In this paper, we establish the inequality:  

\begin{gather}
	\label{eqn main inequality} 
	h_{cat}(\phi) \leq h_{top}(\phi).
\end{gather}

\begin{remark}
	\mbox{}
	\begin{enumerate}
		\item \cite{Kikuta-Ouchi, Mattei} studied the comparison of these two entropies in an algebro-geometric setting. 
		In particular, \cite{Kikuta-Ouchi} considered a pair \( (X, \phi) \), where \( X \) is a smooth projective variety, and \( \phi \) is a surjective endomorphism of \( X \). 
		Here, \( \phi \) induces an auto-equivalence \( \Phi \) on the derived category of coherent sheaves.
		The induced categorical dynamical system defines the categorical entropy of \( \phi \). 
		In this context, \cite{Kikuta-Ouchi} proved the equality \( h_{cat}(\Phi) = h_{top}(\phi) \). 
		\item In the symplectic-geometric setting, \cite{Bae-Choa-Jeong-Karabas-Lee} proved the inequality \( h_{cat}(\phi) \leq h_{top}(\phi) \) in specific cases.
	\end{enumerate}
\end{remark}

Before presenting formal statements of our results, we provide additional context, particularly references that explore the relationship between symplectic-topological invariants and the topological entropy of symplectic dynamical systems.
The symplectic-topological invariants we focus on here are {\em Floer-theoretic invariants}, as Lagrangian Floer theory plays a key role in connecting categorical and topological entropy in this work. 
However, there are numerous other significant results in the study of symplectic dynamical systems; see, for example, \cite{Smith12, Alves16, Dahinden18, Alves-Meiwes19, Dahinden20, Alves-Meiwes24}.

Floer theory produces various chain complexes and their homologies from symplectic manifolds and associated data. 
For instance, given a symplectic manifold \( X \) and a symplectic automorphism \( \phi: X \to X \), one can define the {\em fixed-point Floer homology} \( HF_*(\phi^n) \) for each \( n \in \mathbb{Z} \). 
When \( X \) is a compact surface and \( \phi \) is a symplectomorphism of \( X \), Fel'shtyn \cite{Felshtyn12} related the topological entropy of \( \phi \) to the growth rate of \( \dim HF_*(\phi^n) \) as \( n \to \infty \). 
Similarly, Frauenfelder and Schlenk \cite{Frauenfelder-Schlenk05, Frauenfelder-Schlenk15}, as well as Macarini and Schlenk \cite{Macarini-Schlenk11}, observed a connection between the growth rate of dimensions of {\em Lagrangian Floer homology} and the topological entropy of the corresponding dynamical system. 

In contact topology, a mathematical cousin of symplectic topology, it is known that the topological entropy of Reeb flows is bounded below by the exponential growth rate of the dimension of {\em contact homology}. 
For related results, see \cite{Alves-Colin-Honda19}.

We note that the results cited above primarily relate the topological entropy to the dimensions of Floer {\em homology}. More recently, leveraging ideas from {\em topological data analysis} and the theory of {\em persistence modules}, researchers have begun to study the relationship between the topological entropy of symplectic dynamical systems and the dimensions of Floer {\em chain complexes}. 
For example, Cineli, Ginzburg, G\"{u}rel \cite{Cineli-Erman-Ginzburg} explored the entropy of Hamiltonian flows using these methods.

In this paper, when \( W \) is a Weinstein manifold and \( \phi \) is a compactly supported symplectic automorphism, we relate the categorical entropy of \( \phi \) to the growth of dimensions of Lagrangian Floer homologies and the topological entropy of \( \phi \) to the growth of dimensions of Lagrangian Floer cochain complexes. 
Detailed statements of our results and an outline of the methods will be presented in the next subsection.

\subsection{Results}
\label{subsection results}
One of our main results is the inequality given in \eqref{eqn main inequality}.  
A key reason why we have an inequality instead of an equality in \eqref{eqn main inequality} is that \(h_{cat}(\phi)\) is invariant under compactly supported Hamiltonian isotopy, whereas \(h_{top}(\phi)\) is not.
In other words, if \(\phi_1\) and \(\phi_2\) are Hamiltonian isotopic, then \(h_{cat}(\phi_1) = h_{cat}(\phi_2)\). 
This is because \(\phi_1\) and \(\phi_2\) induce the same auto-equivalence on \(\cW(W)\). 
However, \(h_{top}(\phi_1)\) and \(h_{top}(\phi_2)\) do not necessarily coincide.  
Thus, one might expect the topological entropy to be more sensitive than the categorical entropy, making the inequality in Theorem \ref{thm main in introduction} plausible.  

\begin{thm}[=Theorem \ref{thm main}]  
	\label{thm main in introduction}  
	The categorical entropy of \(\phi\) bounds the topological entropy of \(\phi\) from below, i.e.,  
	\[
	h_{cat}(\phi) \leq h_{top}(\phi).
	\]  
\end{thm}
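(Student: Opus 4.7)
The plan is to compute the categorical entropy via a growth-rate formula on a split-generator of $\cW(W)$, translate this bound into a growth of geometric intersection numbers, and finally invoke Yomdin's theorem to bound that growth by the topological entropy $h_{top}(\phi)$.

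First, I would fix a finite collection of Lagrangian split-generators $L_1,\dots,L_k$ of $\cW(W)$; for instance, the cocore disks of the critical Weinstein handles, which split-generate the wrapped Fukaya category by the results of Chantraine--Dimitroglou Rizell--Ghiggini--Golovko and of Ganatra--Pardon--Shende. Setting $G=\bigoplus_i L_i$, the Dimitrov--Haiden--Katzarkov--Kontsevich definition of categorical entropy together with split-generation yield the upper bound
\[
h_{cat}(\phi)\;\le\;\limsup_{n\to\infty}\,\frac{1}{n}\log\sum_{i,j}\dim HW^{*}(L_i,\phi^{n}L_j).
\]

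Second, I would bound each summand by a geometric intersection count. Choose a small Hamiltonian perturbation $L_i^{\varepsilon}$ of $L_i$ and a quadratic wrapping Hamiltonian at infinity. Since $\phi$ is compactly supported, $\phi^{n}L_j$ coincides with $L_j$ outside a fixed compact set $K\subset W$, so the wrapped cochain complex $CW^{*}(L_i,\phi^{n}L_j)$ is generated by (i) transverse intersection points of $L_i^{\varepsilon}$ with $\phi^{n}L_j$ inside $K$, and (ii) Reeb chords at infinity whose count does not depend on $n$. This gives
\[
\dim HW^{*}(L_i,\phi^{n}L_j)\;\le\;\#\bigl(L_i^{\varepsilon}\cap\phi^{n}L_j\bigr)+C
\]
for a constant $C$ independent of $n$.

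Third, since $\phi$ is a smooth compactly supported diffeomorphism, Yomdin's theorem applied on a compact manifold containing $\mathrm{supp}(\phi)$ in its interior gives, for any compact $K'\supset \mathrm{supp}(\phi)$,
\[
\limsup_{n\to\infty}\,\frac{1}{n}\log\vol\bigl(\phi^{n}L_j\cap K'\bigr)\;\le\;h_{top}(\phi).
\]
Combined with the standard tubular-neighbourhood estimate $\#(L_i^{\varepsilon}\cap\phi^{n}L_j)\le C_i\cdot\vol(\phi^{n}L_j\cap K')$, valid because $L_i^{\varepsilon}$ is fixed, smooth, and of complementary dimension to $L_j$, this yields $\limsup_{n}\,\frac{1}{n}\log\#(L_i^{\varepsilon}\cap\phi^{n}L_j)\le h_{top}(\phi)$. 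Combining the three steps produces the desired $h_{cat}(\phi)\le h_{top}(\phi)$.

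I expect the main obstacle to be Step 2: arranging the wrapped Fukaya category so that the rank of $HW^{*}(L_i,\phi^{n}L_j)$ is genuinely controlled by a transverse intersection count in $K$ plus an $n$-independent contribution from infinity. This requires a careful coordination of the wrapping Hamiltonian, the almost complex structure, and the perturbation $L_i^{\varepsilon}$, and it is precisely here that the compact support of $\phi$ is used decisively. A secondary but manageable technicality is to make the volume-to-point estimate of Step 3 uniform in $i$, $j$, and $n$ near $\partial K'$, which should follow from a standard uniform tubular-neighbourhood argument around $L_i^{\varepsilon}$.
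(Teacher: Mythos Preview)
Your proposal contains two genuine gaps. The first, which you correctly identify as the main obstacle, is more serious than a matter of ``careful coordination''. The wrapped Fukaya category $\cW(W)$ is smooth but in general \emph{not proper}: for cocore disks the spaces $HW^*(L_i,L_j)$ are typically infinite-dimensional (already a cotangent fibre $L$ in $T^*S^1$ has $HW^*(L,L)\cong k[t,t^{-1}]$), because the generators of $CW^*$ include \emph{all} Reeb chords from $\partial L_i$ to $\partial L_j$, of which there are infinitely many. Thus the ``constant $C$'' in your Step~2 is $+\infty$, and the growth formula you invoke in Step~1 (Theorem~2.6 of Dimitrov--Haiden--Katzarkov--Kontsevich) requires the category to be smooth \emph{and} proper. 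The paper's fix is to pass to a fully stopped partially wrapped category $\cW(\hW,\Lambda)$ (the Fukaya--Seidel category of a Lefschetz fibration), which \emph{is} proper, and to show via Lemma~\ref{lemma wrapped vs partially wrapped} that the categorical entropy is unchanged; then $HW_\Lambda(G,\phi^n G)=HF(\varphi_0(G),\phi^n G)$ is honestly finite-dimensional and generated by intersection points inside a fixed compact set.

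The second gap is in Step~3: the pointwise ``tubular-neighbourhood estimate'' $\#(L_i^\varepsilon\cap\phi^n L_j)\le C_i\cdot\vol(\phi^n L_j\cap K')$ is false for a single fixed perturbation $L_i^\varepsilon$. A curve can cross a fixed line $N$ times while having length $O(1)$ (take the graph of $N^{-1}\sin(Nx)$ on $[0,1]$), so transverse intersection number is not controlled by volume pointwise. What does hold is a Crofton-type inequality: for a suitable family $\{L^s\}_{s\in B}$ of Hamiltonian perturbations whose evaluation map is a submersion, one has $\int_B \#(L^s\cap\phi^n L_j)\,ds\le C\cdot\vol(\phi^n L_j\cap W)$, and then Hamiltonian invariance of $HF$ lets one replace the integrand by the $s$-independent quantity $\dim HF(L_i,\phi^n L_j)$. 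This is precisely the content of Lemmas~\ref{lemma Lagrangian tomograph} and~\ref{lemma Crofton's inequality}, and it cannot be replaced by a pointwise bound.
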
  

\begin{proof}[Sketch of Proof]  
	Let \(\mathcal{C}\) be a triangulated category with a generator \(G\), and let \(\Phi: \mathcal{C} \to \mathcal{C}\) be an auto-equivalence.
	By \cite[Theorem 2.6]{Dimitrov-Haiden-Katzarkov-Kontsevich}, if \(\mathcal{C}\) is smooth and proper, then  
	\begin{gather}
		\label{eqn key theorem}
		h_{cat}(\Phi) = \lim_{n \to \infty} \dim \operatorname{Hom}\left(G, \Phi^n(G)\right).
	\end{gather}
	
	For the given dynamical system \((W, \phi)\), we note that the wrapped Fukaya category \(\cW(W)\) of \(W\) is smooth but not necessarily proper.
	Thus, \cite[Theorem 2.6]{Dimitrov-Haiden-Katzarkov-Kontsevich} cannot be applied directly.  
	
	However, a fully stopped partially wrapped Fukaya category is proper.
	Moreover, by Lemma \ref{lemma wrapped vs partially wrapped}, the categorical entropies of \(\phi\) on \(\cW(W)\) and on a partially wrapped Fukaya category are the same. 
	Thus, Equation \eqref{eqn key theorem} holds if \(\operatorname{Hom}\) refers to the morphism space in a fully stopped partially wrapped Fukaya category.  
	
	To complete the proof, we show that the right-hand side of Equation \eqref{eqn key theorem} bounds \(h_{top}(\phi)\) from below. 
	The key idea is to apply Crofton's inequality \eqref{eqn Crofton's inequality}, introduced in Lemma \ref{lemma Crofton's inequality}. 
	(Note that the inequality in \eqref{eqn Crofton's inequality} is a modification of \cite[Lemma 5.3]{Cineli-Erman-Ginzburg}. 
	For additional context, see Remark \ref{rmk Crofton inequality}.) 
	
	Specifically, let \(L_1 = G\) and \(L_2 = \phi^n(G)\), where \(G\) is a generator of a partially wrapped Fukaya category and \(n \in \mathbb{Z}_{\geq 1}\).  
	Then, combining \eqref{eqn key theorem} and \eqref{eqn proof 2}, we conclude that the exponential growth of the left-hand side of \eqref{eqn Crofton's inequality} bounds the categorical entropy of \(\phi\) from above, while the exponential growth of the right-hand side bounds the topological entropy of \(\phi\) from below.  
	See Lemma \ref{lemma Crofton's inequality} and Section \ref{section categorical vs topological entropy} for details.
\end{proof}

For the categorical entropy of \(\phi\), we primarily work with the wrapped Fukaya category \(\cW(W)\).  
However, there exists another triangulated category associated with \(W\): the compact Fukaya category \(\Fuk(W)\) (or its triangulated closure).  
This raises the question: why do we choose \(\cW(W)\) over \(\Fuk(W)\)?  

The reasons are as follows:  
\begin{itemize}  
	\item It is well-known that \(\cW(W)\) admits a Lagrangian generator.  
	For \(\Fuk(W)\), the existence of a Lagrangian generator is not known in general for arbitrary \(W\).  
	\item \(\cW(W)\) is a smooth category, while \(\Fuk(W)\) is not necessarily smooth.  
	Thus, \cite[Theorem 2.6]{Dimitrov-Haiden-Katzarkov-Kontsevich} cannot be applied to \(\Fuk(W)\).  
\end{itemize}

Nevertheless, if assumptions are added to resolve these issues, one could expect inequality \eqref{eqn main inequality} to hold for \(\Fuk(W)\) as well.  
With this perspective, we prove Theorem \ref{thm compact in introduction}.  

\begin{thm}[=Theorem \ref{thm main theorem for compact}]  
	\label{thm compact in introduction}  
	Let \((W, \phi: W \to W)\) satisfy the hypotheses in Lemma \ref{lem compact entropy}.  
	Let \(\Phi_{\Fuk(W)}\) denote the functor induced by \(\phi\) on the compact Fukaya category \(\Fuk(W)\).  
	Then the categorical entropy \(h_{cat}(\Phi_{\Fuk(W)})\) bounds the topological entropy of \(\phi\) from below:  
	\[h_{cat}(\Phi_{\Fuk(W)}) \leq h_{top}(\phi).\]  
\end{thm}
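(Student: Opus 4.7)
The plan is to reuse the three-step architecture of the sketch for Theorem \ref{thm main in introduction}, but with $\Fuk(W)$ playing the role of the partially wrapped Fukaya category. The hypothesis of Lemma \ref{lem compact entropy} is designed precisely to remove the two obstructions listed in the paragraph preceding Theorem \ref{thm compact in introduction}: it supplies a Lagrangian object $G$ that generates $\Fuk(W)$, and it guarantees that $\Fuk(W)$ is smooth. Since $\Fuk(W)$ is always proper (the Floer cohomology of compact Lagrangians is finite dimensional), we now have the three ingredients needed to invoke \cite[Theorem 2.6]{Dimitrov-Haiden-Katzarkov-Kontsevich}.

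First I would record the identity
\[
	h_{cat}(\Phi_{\Fuk(W)}) \;=\; \limsup_{n \to \infty} \frac{1}{n}\log \dim \mathrm{Hom}_{\Fuk(W)}\bigl(G, \Phi_{\Fuk(W)}^n(G)\bigr),
\]
which follows directly from \cite[Theorem 2.6]{Dimitrov-Haiden-Katzarkov-Kontsevich} under the smoothness and properness of $\Fuk(W)$, together with the generator $G$ provided by the hypothesis. Because the auto-equivalence $\Phi_{\Fuk(W)}$ comes geometrically from $\phi$, the right-hand side is computed by the Floer cohomology $HF^\ast(G, \phi^n(G))$, and its total dimension is bounded above by the number of transverse intersection points $\#(\phi^n(G) \pitchfork G)$ for a generic perturbation.

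Next I would invoke the Crofton-type machinery developed in Lemmas \ref{lemma Lagrangian tomograph} and \ref{lemma Crofton's inequality}. The input data are a Lagrangian tomograph associated to $G$ and the compact Lagrangian $\phi^n(G)$; Crofton's inequality bounds the average intersection number by the $n$-volume of $\phi^n(G)$ in a fixed Riemannian metric, so that
\[
	\dim HF^\ast\bigl(G, \phi^n(G)\bigr) \;\leq\; C \cdot \vol\bigl(\phi^n(G)\bigr)
\]
for some constant $C$ independent of $n$. Yomdin's theorem, exactly as used in the wrapped case, then gives
\[
	\limsup_{n \to \infty} \frac{1}{n}\log \vol\bigl(\phi^n(G)\bigr) \;\leq\; h_{top}(\phi),
\]
and combining with the previous display yields the claimed inequality. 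The adaptation of the Crofton estimate is the step that must be checked carefully: in the wrapped setting one applies it to cocores inside a Liouville domain, whereas here both $G$ and $\phi^n(G)$ are closed Lagrangians, so one must verify that the Lagrangian tomograph constructed in Lemma \ref{lemma Lagrangian tomograph} can be arranged near a compact Lagrangian and that the constant $C$ does not deteriorate under iteration.

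The main obstacle I foresee is not the structural scheme, which is parallel to Theorem \ref{thm main in introduction}, but rather the technical point that the generator $G$ provided by the assumption of Lemma \ref{lem compact entropy} is amenable to the Crofton estimate. In particular, one should choose $G$ so that a Weinstein neighborhood is available on which the tomograph of Lemma \ref{lemma Lagrangian tomograph} applies, and one must confirm that compact support of $\phi$ allows Yomdin's volume growth bound to be applied to $\phi^n(G)$ in its entirety rather than only to its intersection with a compact piece. Once these verifications are in place, the proof is a direct translation of the argument used for the wrapped category.
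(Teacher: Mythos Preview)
Your scheme has a gap at the very first step: you assert that the hypothesis of Lemma~\ref{lem compact entropy} guarantees that $\Fuk(W)$ is smooth, and then invoke \cite[Theorem~2.6]{Dimitrov-Haiden-Katzarkov-Kontsevich} to get
\[
h_{cat}(\Phi_{\Fuk(W)}) = \limsup_{n\to\infty}\frac{1}{n}\log\dim HF^*(G,\phi^n(G)).
\]
But Assumption~\ref{assumption1} does not say that $\Fuk(W)$ is smooth, and Lemma~\ref{lem compact entropy} does not yield this formula. What the assumption actually provides is a collection of closed Lagrangians $\{S_i\}$ split-generating $\Fuk(W)$ \emph{together with} dual (generally non-compact) Lagrangians $\{L_i\}$ satisfying $|S_i\cap L_j|=\delta_{ij}$. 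The output of Lemma~\ref{lem compact entropy}, proved via the duality arguments of \cite[Lemma~6.5, Theorem~6.6]{Bae-Choa-Jeong-Karabas-Lee} rather than via DHKK, is the formula
\[
h_{cat}(\Phi_{\Fuk(W)}) = \lim_{n\to\infty}\frac{1}{n}\log\dim HF^*(\phi^n(S),L),
\]
with $S=\bigoplus_i S_i$ and $L=\bigoplus_i L_i$. The right-hand side involves the \emph{non-compact} dual $L$, not a self-pairing of the compact generator.

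Accordingly, the paper's proof does not run Crofton on the pair $(G,\phi^n(G))$ as you propose. It applies Lemma~\ref{lemma Lagrangian tomograph} to the good pair $(S,\phi^{-n}(L))$, builds a tomograph $\{S^s\}$ around the compact $S$, and uses Lemma~\ref{lemma Crofton's inequality} to bound $\dim HF^*(\phi^n(S),L)=\dim HF^*(S,\phi^{-n}(L))$ by $C\cdot\vol(\phi^{-n}(L)\cap W)$. Yomdin (Proposition~\ref{prop Yomdin thm}) is then applied to the volume growth of $\phi^{-n}(L)\cap W$. Your plan to track $\vol(\phi^n(G))$ for a closed $G$ might be made to work if you could independently justify the self-pairing formula, but as written the appeal to smoothness is unsupported and the quantity you bound is not the one supplied by the lemma you cite.
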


\subsection{Further questions}
\label{subsection further questions}
At the beginning of Section \ref{subsection results}, we highlighted a reason why we expect the inequality \eqref{eqn main inequality}: categorical entropy cannot distinguish members of a Hamiltonian isotopic class, whereas topological entropy can. 
Here, we present another philosophical reason for this expectation.  

To explain this reason, we review a property of topological entropy.  
By \cite{Newhouse, Przytycki, Yomdin}, it is known that  
\[
h_{top}(\phi) = \sup_{\text{compact submanifold } Y \subset W} \left(\text{the exponential growth rate of } \vol\left(\phi^n(Y)\right) \text{ with respect to } n\right).
\]  
In other words, \(h_{top}(\phi)\) can be computed by taking the supremum over all submanifolds \(Y\).  

On the other hand, the categorical entropy of \(\phi\) is only influenced by exact Lagrangian submanifolds; other submanifolds do not contribute to the categorical entropy.  

As a counterpart to the exponential growth rate of \(\vol\left(\phi^n(Y)\right)\), we define another entropy, called {\em barcode entropy}.
This notion is a slight modification of the {\em relative barcode entropy} introduced in \cite{Cineli-Erman-Ginzburg}.  
By definition, barcode entropy is not an invariant of the dynamical system \((W, \phi)\) but rather an invariant of \((W, \phi, L_1, L_2)\), where \(L_1\) and \(L_2\) are Lagrangian submanifolds of \(W\).  
We denote the barcode entropy for \((W, \phi, L_1, L_2)\) by \(h_{bar}(\phi; L_1, L_2)\).  
For details, see Section \ref{section barcode entropy}.  

With this definition, we establish Proposition \ref{prop barcode}:  
\begin{prop}[=Propositions \ref{prop bar vs top} and \ref{prop bar vs cat}]  
	\label{prop barcode}  
	For a pair of Lagrangians \((L_1, L_2)\) satisfying the conditions in Section \ref{section barcode entropy},  
	\[
	h_{cat}(\phi) \leq h_{bar}(\phi; L_1, L_2) \leq h_{top}(\phi).
	\]  
\end{prop}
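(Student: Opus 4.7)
The plan is to prove the two inequalities in Proposition \ref{prop barcode} separately, both by sandwiching against the number of sufficiently long bars in the persistence barcode of $CF(L_2, \phi^n(L_1))$. Write $b_\epsilon(n)$ for the number of bars of length at least $\epsilon$ in this barcode, so that by definition $h_{bar}(\phi; L_1, L_2)$ is the double limit $\lim_{\epsilon \to 0^+} \limsup_n \tfrac{1}{n}\log b_\epsilon(n)$.

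For the upper bound $h_{bar} \leq h_{top}$, each generator of the Floer complex contributes to at most one bar, so after a transverse perturbation one has $b_\epsilon(n) \leq \#\bigl(\phi^n(L_1) \cap L_2\bigr)$. The Lagrangian tomograph of Lemma \ref{lemma Lagrangian tomograph} together with the Crofton inequality of Lemma \ref{lemma Crofton's inequality} then supplies an $n$-independent constant $C>0$ with
\[ \#\bigl(\phi^n(L_1) \cap L_2\bigr) \leq C\, \vol\bigl(\phi^n(L_1)\bigr) \]
for generic $L_2$ in the tomograph family. Finally the Newhouse--Przytycki--Yomdin theorem identifies $h_{top}(\phi)$ with the supremum over compact submanifolds of the exponential volume growth rate, bounding the growth of $\vol(\phi^n(L_1))$. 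Taking $\limsup_n$ and then $\epsilon \to 0^+$ closes this inequality.

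For the lower bound $h_{cat} \leq h_{bar}$, observe that the number of infinite bars in the barcode of $CF(L_2, \phi^n(L_1))$ equals $\dim HF(L_2, \phi^n(L_1))$ and is bounded above by $b_\epsilon(n)$ for every $\epsilon>0$. Under the hypotheses on $(L_1, L_2)$ imposed in Section \ref{section barcode entropy}---which I expect to force the pair to detect morphisms out of a generator of the proper partially wrapped category used in the sketch of Theorem \ref{thm main in introduction}---equation \eqref{eqn key theorem} combined with Lemma \ref{lemma wrapped vs partially wrapped} will imply that $h_{cat}(\phi)$ is at most the exponential growth rate of $\dim HF(L_2, \phi^n(L_1))$. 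Chaining the bounds and letting $\epsilon \to 0^+$ delivers $h_{cat}(\phi) \leq h_{bar}(\phi; L_1, L_2)$.

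The principal obstacle is the Crofton step: standard Crofton inequalities live on compact Riemannian manifolds, whereas here $W$ is noncompact, the iterates $\phi^n(L_1)$ are heavily stretched, and the perturbed $L_2$'s must remain generically transverse to all iterates. Keeping the Crofton constant $C$ uniform in $n$, so it contributes no extra exponential factor, is precisely what the tomograph of Lemma \ref{lemma Lagrangian tomograph} is designed to handle. A secondary subtlety is ensuring that the conditions of Section \ref{section barcode entropy} encode a sufficiently strong split-generation statement so that the pair $(L_1,L_2)$ genuinely witnesses, rather than merely bounds below, the categorical entropy.
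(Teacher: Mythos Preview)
Your argument for $h_{cat}\leq h_{bar}$ is essentially the paper's: the pair in Section~\ref{section barcode entropy} is $(\varphi_0(G),G)$ with $G$ a generator of a fully stopped $\cW(\hW,\Lambda)$, so $\dim HF(\varphi_0(G),\phi^n(G))$ computes $h_{cat}(\Phi_\Lambda)=h_{cat}(\Phi)$ via \cite[Theorem~2.6]{Dimitrov-Haiden-Katzarkov-Kontsevich} and Lemma~\ref{lemma wrapped vs partially wrapped}, and this dimension is the number of infinite bars, hence $\leq b_\epsilon$ for every $\epsilon$.

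For $h_{bar}\leq h_{top}$ your outline has the right architecture but two steps are misstated and, as written, do not go through.

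\emph{First}, you cannot simply say ``after a transverse perturbation $b_\epsilon(n)\leq\#(\phi^n(L_1)\cap L_2)$''. The quantity $b_\epsilon(n)$ is attached to the \emph{original} pair, while the intersection bound holds only for the \emph{perturbed} transverse pair $(L^s,\phi^n(L_2))$. Bridging the two requires the barcode stability lemma (Lemma~\ref{lemma inequality 2}): a Hofer-$\tfrac{\epsilon}{2}$-small perturbation shifts bar lengths by at most $\epsilon$, so one gets
\[
b_{2\epsilon}\bigl(L_1,\phi^n(L_2)\bigr)\ \leq\ b_\epsilon\bigl(L^s,\phi^n(L_2)\bigr)\ \leq\ N_n(s):=|L^s\cap\phi^n(L_2)|.
\]
This is why the tomograph in Lemma~\ref{lemma Lagrangian tomograph} is built with $d_H<\tfrac{\epsilon}{2}$, and why the proof runs with $b_{2\epsilon}$ rather than $b_\epsilon$.

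\emph{Second}, Lemma~\ref{lemma Crofton's inequality} does \emph{not} give a pointwise bound $N_n(s)\leq C\cdot\vol$ for generic $s$; it gives only the integral bound $\int_{B^d_\epsilon}N_n(s)\,ds\leq C\cdot\vol(\phi^n(L_2)\cap W)$. The argument works because $b_{2\epsilon}(L_1,\phi^n(L_2))$ is \emph{constant in $s$}: integrating the displayed inequality over $B^d_\epsilon$ yields
\[
\vol(B^d_\epsilon)\cdot b_{2\epsilon}\bigl(L_1,\phi^n(L_2)\bigr)\ \leq\ \int_{B^d_\epsilon}N_n(s)\,ds\ \leq\ C\cdot\vol\bigl(\phi^n(L_2)\cap W\bigr),
\]
and only then does one take $\limsup_n\tfrac{1}{n}\log$ and invoke Proposition~\ref{prop Yomdin thm}. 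Your ``for generic $L_2$ in the tomograph family'' formulation skips this integration and asserts a bound that Crofton does not provide.
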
  

Based on Proposition \ref{prop barcode} and the preceding discussion, we ask whether the following equations hold:  
\begin{gather*}  
	h_{cat}(\phi) = \inf_{L_1, L_2} h_{bar}(\phi; L_1, L_2),\\  
	h_{top}(\phi) = \sup_{L_1, L_2} h_{bar}(\phi; L_1, L_2).  
\end{gather*}  

\subsection{Structure of the paper}
The paper consists of six sections except Section \ref{section introduction}.
Section \ref{section preparation} reviews definitions and preliminaries.
Sections \ref{section Crofton's inequality} and \ref{section categorical vs topological entropy} prove the main theorem, i.e., Theorem \ref{thm main in introduction}. 
Section \ref{section examples} discusses two examples: the first example shows that the inequality \eqref{eqn main inequality} can be strict, and the second example shows that categorical entropy could force topological entropy to be positive. 
Section \ref{section the case of compact Fukaya category} considers the compact Fukaya category of $W$ under some assumptions. 
Section \ref{section barcode entropy} is about the further questions described in Section \ref{subsection further questions}.

\subsection{Acknowledgment}
In Sections \ref{section Crofton's inequality} and \ref{section barcode entropy}, and the proof of Theorem \ref{thm main}, we use the idea given in \cite{Cineli-Erman-Ginzburg} heavily.
Also, Definition \ref{def barcode entropy} is originally introduced in \cite{Cineli-Erman-Ginzburg}. 
The second named author appreciates Viktor Ginzburg for explaining the key ideas of \cite{Cineli-Erman-Ginzburg} in a seminar talk and a personal conversation. 
The first named author is grateful to Otto van Koert for helpful comments. 

Hanwool Bae was supported by the National Research Foundation of Korea(NRF) grant funded by the Korea government(MSIT) (No.2020R1A5A1016126), and Sangjin Lee was supported by the Institute for Basic Science (IBS-R003-D1). 

\section{Preliminaries} 
\label{section preparation}
In this section, we give some preliminaries on Lagrangian Floer theory and (partially) wrapped Fukaya categories. Then we also introduce the notions of topological entropy and categorical entropy.

\subsection{Lagrangian Floer theory}
Let $\hW$ be a Weinstein manifold with a Liouville one form $\lambda$, which means that $(\hW,\omega=d\lambda)$ is a non-compact symplectic manifold such that the Liouville vector field $Z$, i.e., $Z$ is a vector field on $\hW$ characterized by $\iota_Z \omega = \lambda$, is gradient-like with respect to a certain
Morse function on $\hW$. See \cite{Cieliebak-Eliashberg12} for a reference on Weinstein manifolds.
Then, there exists a Weinstein domain $W$ whose completion is $\hW$. 
In other words, $(W, \omega|_{W}=d\lambda|_{W})$ is a compact symplectic submanifold with boundary of $\hW$ of codimension $0$, for which the Liouville vector field points outward along the boundary $\partial W$, and $\hW$ is obtained from $W$ by gluing the cylindrical end $[1,\infty) \times \partial W$ along $\partial W$, namely,
\[\hW= W \cup \left([1, \infty)\times \partial W \right).\]
Furthermore, on the cylindrical end $[1,\infty) \times \partial W$, the Liouville one form $\lambda$ satisfies
\[\lambda|_{[1,\infty) \times \partial W} := r \alpha\]
where $r$ is the coordinate for $[1,\infty)$ and $\alpha := \lambda|_{\partial W}$. We remark that $\partial W$ is a contact manifold with the contact form $\alpha$. Furthermore, since the Liouville vector field $Z$ does not vanish outside a compact subset of $\hW$, we may consider the orbit space $\partial_\infty \hW$ of the flow of $Z$ at infinity, which we will call the ideal boundary of $\hW$. One can deduce that $\partial W$ is naturally diffeomorphic to $\partial_\infty \hW$.

\begin{remark}
	We would like to point out that for a given Weinstein manifold $\hW$, the choice of a Weinstein domain $W$ is not unique.
	It seems that the choice of $W$ is related to the construction of wrapped Fukaya category of $\hW$.
	For example, the notion of {\em cylindrical at $\infty$} depends on the choice of $W$, see Definition \ref{def Lagrangian}. 
	However, it is well-known that the choice of $W$ does not change the Morita-equivalence class of the resulting wrapped Fukaya category of $\hW$. 
	In the current paper, in order to avoid unnecessarily confusion, we assume that a Weinstein domain $W$ is given and $\hW$ is defined to be the completion of $W$. 
\end{remark}

To discuss the Floer theory of a pair of Lagrangian submanifolds of $\hW$, let $J$ be an almost complex structure on $\hW$ that is compatible with the symplectic form $\omega = d \lambda$ and is of contact type at $\infty$.
The first condition means that the symplectic structure $\omega$ and the almost complex structure $J$ determine a Riemannian metric $g$ on $W$ given by
\[ g(\cdot ,\cdot ) = \omega( \cdot, J \cdot).\]
The latter condition means that the almost complex structure $J$ maps the Liouville vector field $Z$ to the Reeb vector field associated to $\alpha$ on $\partial W$ and preserves the contact distribution, which is necessary to apply maximum principles to $J$-holomorphic curves in $\hW$.

\begin{definition}
	\label{def Lagrangian}
	An exact Lagrangian submanifold $L$ of $(\hW,\omega)$ is {\em cylindrical (at $\infty$)} if $L$ and $\partial W$ intersect transversely and
	\[L \cap \left([1,\infty) \times \partial W \right) =  [1,\infty)\times \mathcal{L},\]
	where $\mathcal{L} := L \cap \partial W$. 
\end{definition}
For convenience, we will use the term ``Lagrangian'' instead of ``exact Lagrangian submanifold that is cylindrical at $\infty$''.

Let us assume that $L_1$ and $L_2$ are a transversal pair of Lagrangians in $\hW$.
Since $L_i$ is an exact Lagrangian submanifold, there is a primitive function 
\[h_i : L_i \to \mathbb{R},\]
such that $\lambda|_{L_i} = d f_i$. Let us fix such a primitive function $h_i$ for each Lagrangian $L_i$. 

Let $\mathcal{P}(L_1, L_2)$ be the space of paths from $L_1$ to $L_2$ in $\hW$. We define the action functional
\begin{equation}\label{eq action}
	\mathcal{A} = \mathcal{A}_{L_1, L_2} : \mathcal{P}(L_1, L_2) \to \mathbb{R}
\end{equation}
by
\[\mathcal{A}(\gamma) := -\int \gamma^*\lambda - h_1(\gamma(0)) + h_2(\gamma(1)), \forall \gamma \in \mathcal{P}(L_1,L_2).\]
Let us further equip the path space $\mathcal{P}(L_1,L_2)$ with the standard $L^2$-metric induced by $g$. Then it is straightforward to check that
\begin{itemize}
	\item critical points of the action functional $\mathcal{A}$ are  constant paths from $L_1$ to $L_2$, i.e., intersection points of $L_1$ and $L_2$, and
	\item gradient flows are given by strips 
	\[ u : \mathbb{R} \times [0,1] \to \hW\]
	satisfying 
	\[	u(s,0) \in L_1, u(s,1)\in L_2, \forall s\in \mathbb{R}\]
	and the $J$-holomorphic equation
	\begin{align*}
	\partial_s u+ J \partial_t u =0& , \forall (s,t) \in \mathbb{R} \times [0,1].
	\end{align*}
\end{itemize}

The Lagrangian Floer complex $CF^*(L_1,L_2)$ is given by the Morse complex for the action functional $\mathcal{A} =\mathcal{A}_{L_1,L_2}$ . Indeed, for a given field $k$,
\begin{itemize}
	\item $CF^*(L_1,L_2)$ is a graded $k$-vector space generated by intersection points $L_1 \cap L_2$, and
	\item the differential $\delta : CF^*(L_1,L_2) \to CF^{*+1} (L_1,L_2)$ is defined by counting $J$-holomorphic strips of index $1$ between two intersection points of $L_1$ and $L_2$.
\end{itemize}
We call the resulting cohomology Lagrangian Floer cohomology between $L_1$ and $L_2$ and denote it by $HF^*(L_1,L_2)$.

We would like to point out that the grading of the Floer cochain complex is not crucial in our remaining arguments. We will just assume that one of the followings holds.
\begin{itemize}
	\item either the Floer complex $CF^*(L_1,L_2)$ is $\mathbb{Z}/2$-graded, which is always possible, or
	\item it is $\mathbb{Z}$-graded assuming that 
	\[ 2 c_1(\hW) =0\]
	and  the Lagrangians $L_1$ and $L_2$ are graded in the sense of \cite[Section 12]{Seidel}. 
\end{itemize}
Later in Section \ref{section barcode entropy} where the grading is not important, we will usually denote the Floer cochain complex just by $CF(L_1,L_2)$ without the superscript $^*$.

We remark that one can extend the action functional $\mathcal{A}$ \eqref{eq action} as a function defined on $CF^*(L_1,L_2)$ as 
\begin{gather}
	\label{eqn non-archimedean norm}
	\mathcal{A}\left(\sum_{x_i \in L_1 \cap L_2} a_i x_i\right) = \max\left\{\mathcal{A}(x_i) | a_i \neq 0\right\}.
\end{gather}
We need the extended action functional $\mathcal{A}$ in Section \ref{section barcode entropy}.

\subsection{Partially wrapped Fukaya category}
Let $W$ be a Weinstein domain and let $(\hW,\omega=d\lambda)$ be the completion of $W$ as in the previous subsection. The purpose of this subsection is to provide a brief introduction of partially wrapped Fukaya category associated to a stop of $\hW$, mainly focusing on the definition of morphism space. There are two good references \cite{Sylvan19, Ganatra-Pardon-Shende18} for this topic. For convenience, we stick to the description given in \cite{Sylvan19}. A {\em stop} $\Lambda$ of $\hW$ is a (possibly empty) hypersurface with boundary of $\partial W$ (or equivalently $\partial_{\infty} \hW$) such that $\Lambda$ becomes a Liouville domain with the Liouville form $\lambda|_{\Lambda}$.

Let $\Lambda\subset \partial W= \partial_{\infty} \hW$ be a stop. The partially wrapped Fukaya category $\mathcal{W}(W,\Lambda)$ is an $A_{\infty}$-category associated to the pair $(W,\Lambda)$ whose objects are cylindrical Lagrangians $L$ of $\hW$ equipped with additional structures such as gradings, relative pin structures and local systems such that $L \cap \partial W \cap \Lambda =\emptyset$. 

To define morphism spaces of $\mathcal{W}(W,\Lambda)$, we consider quadratic Hamiltonians $H$ on $\hW$ that are compatible with the stop $\Lambda$ in the sense of \cite[Section 2.4]{Sylvan19}. Let us denote the set of all such Hamiltonians on $\hW$ by $\mathcal{H}(\hW)$. For any Hamiltonian $H$ on $\hW$, we denote the corresponding Hamiltonian vector field by $X_H$, that is, $X_H$ is a unique vector field on $\hW$ characterized by
\[\iota_{X_H}\omega = -dH.\]
We further consider an almost complex structure $J$ that is compatible with $\omega$ and is of contact type as in the previous subsection.

Let $L_1$ and $L_2$ be Lagrangians that are allowed to be objects of $\mathcal{W}(W,\Lambda)$. For a Hamiltonian $H \in \mathcal{H}(\hW)$, we define an action functional 
 \begin{equation}
 	\label{eqn action functional 2}
 \mathcal{A}_{L_1,L_2,H} : \mathcal{P}(L_1,L_2) \to \mathbb{R}
 \end{equation}
 by
 \[ \mathcal{A}_{L_1,L_2,H} (\gamma) = -\int \gamma^*\lambda + \int_{0}^{1} H(\gamma(t))dt- h_1(\gamma(0)) + h_2(\gamma(1)).\]
 
 As done in the previous subsection, it is easy to check that
 \begin{itemize}
 	\item critical points of the action functional $\mathcal{A}_{L_1,L_2,H}$ are $X_H$-chords from $L_1$ to $H_2$, where an $X_H$-chord from $L_1$ to $L_2$ means a path $\gamma:[0,1]\to \hW$ satisfying $\gamma(0)\in L_1$, $\gamma(1) \in L_2$ and $\dot{\gamma}(t) = X_H(\gamma(t))$, and
 	\item gradient flows are given by Floer strips 
 	\[ u : \mathbb{R} \times [0,1] \to \hW\]
 	satisfying 
 	\[	u(s,0) \in L_1, u(s,1)\in L_2, \forall s\in \mathbb{R}\]
	and the Floer equation
 	\[  \partial_s u+ J (\partial_t u - X_H(u)) =0 , \forall (s,t) \in \mathbb{R} \times [0,1].\]
 \end{itemize}
 Let us denote by $\mathcal{X}(L_1,L_2;H)$ the set of all $X_H$-chords from $L_1$ to $L_2$. There is a function $n_{\Lambda} : \mathcal{X}(L_1,L_2;H) \to  \mathbb{Z}_{\geq 0}$ defined by counting the intersection number of a given $X_H$-chord with the stop $\Lambda$. We take $\mathcal{X}_0(L_1,L_2;H)$ to be $n_{\Lambda}^{-1}(0)$.

 For a given field $k$, the partially wrapped Floer complex $CW_{\Lambda}^*(L_1,L_2)$ is defined as follows.
 \begin{itemize}
 	\item $CW^*_{\Lambda}(L_1,L_2)$ is a graded $k$-vector space generated by elements of $\mathcal{X}_0(L_1,L_2;H)$, and
 	\item the differential is defined by counting Floer strips of index $1$ between two elements of $\mathcal{X}_0(L_1,L_2;H)$.
 \end{itemize}
It can be shown as in \cite{Sylvan19} that this indeed defines a cochain complex. We call the resulting cohomology the partially wrapped Floer cohomology between $L_1$ and $L_2$ with respect to the stop $\Lambda$ and denote it by $HW^*_{\Lambda}(L_1,L_2)$.

There are two particularly interesting cases of a stop. The first one is the case $\Lambda=\emptyset$. In this case, the corresponding $CW^*_{\Lambda}(L_1,L_2)$ (respectively $HW^*_{\Lambda}(L_1,L_2)$) is called the wrapped Floer complex between $L_1$ and $L_2$ (resp. wrapped Floer cohomology between $L_1$ and $L_2$.) and denoted by $CW^*(L_1,L_2)$ (resp. $HW^*(L_1,L_2)$).

The second one is the case when the partially wrapped Floer complex (or more generally, partially wrapped Fukaya category) is fully stopped.
To be more precise, we define the notion of {\em full-stop}. 
\begin{definition}
	\label{def full stop}
	A stop $\Lambda$ is a {\em full-stop} if $HW^*_\Lambda(L_1,L_2)$ is of finite dimension for every pair $(L_1, L_2)$. 
\end{definition} 

We would like to note two things.
The first one is the reason why the notion of full-stop is interesting. 
The reason is actually mentioned in Definition \ref{def full stop}, i.e., the morphism spaces have finite dimensional homologies.
Equivalently, we can simply say that the corresponding partially wrapped Fukaya category is {\em proper}. 
This algebraic property plays a key role in the current article. 
Then, a question naturally arises, which is the second one we would like to note. 
The question asks the existence of full-stops. 
We note that for every Weinstein manifold, the existence is guaranteed.
For example, see \cite[Section 8.6]{Ganatra-Pardon-Shende18}.
%

To complete an introduction of partially wrapped Fukaya category, we need to explain the $A_{\infty}$-operations on it. Indeed, for a positive integer $k$, we call a disk with $(k+1)$-boundary punctures a $(k+1)$-gon. We further label the boundary punctures with $\{1,\dots,k+1 \ \mathrm{mod} \ k+2\}$ and call the boundary component between $i$-th and $(i+1)$-puncture the $i$th-boundary compoenent for $1\leq i \leq k+1$. For any collection $(L_1,\dots, L_{k+1})$ of Lagragians of $W$, the $A_\infty$-operation 
\[\mu^k_{\Lambda} : CW^*_{\Lambda}(L_k,L_{k+1})\otimes \dots \otimes CW^*_{\Lambda}(L_1,L_2) \to CW^*_{\Lambda}(L_1,L_{k+1})\] 
is defined by counting rigid Floer $(k+1)$-gons, which map the $i$-th boundary component to $L_i$ and are asymptotic to an $X_H$-chord from $L_i$ to $L_{i+1}$ belonging to $\mathcal{X}_0(L_i,L_{i+1};H)$ at the $i$-th puncture for $1\leq i \leq k$ and an $X_H$-chord from $L_1$ to $L_{k+1}$ at $k+1$-th pucnture.. Here $L_{k+2}$ is defined to be $L_1$ for simplicity. Especially the first operation $\mu^1_{\Lambda}$ coincides with the differential on the partially wrapped Floer complex up to sign. Then the operations $\mu^k_{\Lambda}$'s satisfy the $A_{\infty}$-relations as shown in \cite{Sylvan19}.

 The resulting $A_{\infty}$-category is the partially wrapped Fukaya category $\mathcal{W}(W,\Lambda)$ of the pair $(W,\Lambda)$. As done above, in the case $\Lambda= \emptyset$, the corresponding partially wrapped Fukaya category is called the wrapped Fukaya category of $W$ and  denoted by $\mathcal{W}(W)$. For future use in this paper, we further define the compact Fukaya category $\mathcal{F}(W)$ to be the full subcategory of $\mathcal{W}(W)$ generated by closed exact Lagrangian submanifolds of $\hW$.

\subsection{Topological entropy}
\label{subsection topological entropy}
Let $X$ be a topological space and let $\phi: X \to X$ be a continuous self-mapping defined on $X$. The notion of topological entropy $h_{top}(\phi)$ is defined in \cite{Adler-Konheim-McAndrew} for compact $X$ and in \cite{Hofer72,Hofer74} for non-compact $X$.

In particular, even though $X$ is non-compact, if a continuous self-mapping $\phi : X \to X$ is compactly-supported and so its support is contained in a compact subspace $X_0$ of $X$, then the following equality holds
	\[ h_{top} (\phi|_{X_0}) = h_{top}(\phi).\]
Since we only consider the topological entropy of compactly-supported smooth self-mappings on smooth manifolds in this paper, the above observation implies that it is enough to consider the topological entropy of a smooth self-mapping on a compact smooth manifold. Let $X$ be a compact smooth manifold (with or without boundary) of dimension $n$ equipped with a Riemannian metric $g$ in the rest of Section \ref{subsection topological entropy}. Then the following is a formulation of the topological entropy of a smooth map $\phi: X \to X$ due to Dinaburg \cite{Dinaburg70} and Bowen \cite{Bowen71}.



\begin{definition}[Dinaburg, Bowen]
	\label{def topological entropy}
	\mbox{} 
	\begin{enumerate}
		\item Let {\em $\Gamma_\phi^k$} denote the set of strings 
		\[\Gamma_{\phi}^k:=\Big\{\left(x, \phi(x), \dots, \phi^{k-1}(x)\right) \in X^k := X \times \dots \times X \text{  ($k$ factors).}\Big\}\]
		\item An {\em $\epsilon$-cubes} in $X^k$ is a product of balls in $X$ of radius $\epsilon$.
		\item For a subset $Y \subset X^k$, $\text{Cap}_{\epsilon} Y$ is the minimal number of $\epsilon$-cubes needed to cover $Y$. 
		\item The topological entropy of $\phi$, denoted by $h_{top}(\phi)$, is given by
		\[h_{top}(\phi) := \lim_{\epsilon \to 0} \limsup _{k \to \infty} \frac{1}{k} \log \mathrm{Cap}_{\epsilon} \Gamma_\phi^k.\] 
	\end{enumerate}
\end{definition}
Note that the $h_{top}(\phi)$ in Definition \ref{def topological entropy} (4) does not depend on a specific choice of a Riemannian metric $g$. See \cite{Dinaburg71, Bowen71} for more details on topological entropy.

We end this subsection by stating a property of topological entropy that plays a key role in the proof of Lemma \ref{lemma Crofton's inequality}.
For a $C^\infty$-submanifold $Y \subset X$ of dimension $m$, let 
\[\Gamma_{\phi|_Y}^k := \Big\{\left(y, \phi(y), \dots, \phi^{k-1}(y)\right) \in X^k | y \in Y \Big\}.\]
We note that the product metric on $X^k$ induces an $m$-dimensional volume form. 
Thus, we can measure the volumes of $\Gamma_{\phi|_Y}^k$ for all $k$.
The following is a significant observation due to Yomdin that the exponential growth rate of the volumes is a lower bound of $h_{top}(\phi)$.
\begin{thm}[Yomdin]
	\label{thm Yomdin thm}
	The topological entropy of $\phi$ is bounded by the exponential growth rate of the volume of $\Gamma_{\phi|_Y}^k$, i.e., 
	\[\limsup_{k \to \infty} \frac{1}{k} \log \vol(\Gamma_{\phi|_Y}^k) \leq h_{top}(\phi).\]
\end{thm}
See \cite{Yomdin, Gromov2} for the proof of Theorem \ref{thm Yomdin thm}.

\subsection{Categorical entropy}
\label{subsection categorical entropy}
In this subsection, we introduce the notion of categorical entropy. This requires preliminary knowledge on triangulated categories,  split-generators, etc. We recommend \cite{Gelfand-Manin13,Seidel} as a general reference on these.
We start by introducing the notions of complexity and categorical entropy, which are originally defined in \cite{Dimitrov-Haiden-Katzarkov-Kontsevich}.
\begin{definition}
	\label{def categorical entropy}
	Let $\mathcal{C}$ be a triangulated category with a split-generator $G$. 
	Let $\Phi$ be an auto-equivalence defined on $\mathcal{C}$.  
	\begin{enumerate}
		\item The {\em complexity of $E_2$ relative to $E_1$ at $t$} is a number in $[0,\infty]$ given by
		\[\delta_t(E_1;E_2):= \inf \Bigg\{ \sum_{i=1}^ke^{n_it} | \begin{tikzcd}
			0 \arrow{r} & A_1 \arrow{d}\arrow{r} & A_2 \arrow{d}\\
			& E_1[n_1] \arrow{ul} & E_1[n_2] \arrow{ul}
		\end{tikzcd} \dots \begin{tikzcd}
			A_{k-1} \arrow{r} & E_2\oplus E_2' \arrow[d]\\
			& E_1[n_k] \arrow{ul}
		\end{tikzcd}\Bigg\}.\] 
		\item For a given $t\in\mathbb{R}$, the {\em categorical entropy of $\Phi$ at $t$} is defined as
		\[h_{cat}(\Phi;t):=\lim_{n\to\infty}\frac{1}{n}\log\delta_t(G;\Phi^n(G))\in \{-\infty\}\cup \mathbb{R}.\]
	\end{enumerate}
\end{definition}

In the current paper, we only consider the case of $t=0$. 

\begin{definition}
	Let $\Phi: \mathcal{C} \to \mathcal{C}$ be an auto-equivalence defined on a triangulated category $\mathcal{C}$ with a generator $G$. 
	We define the {\em categorical entropy of $\Phi$} as 
	\[h_{cat}(\Phi):= h_{cat}(\Phi;0).\]
\end{definition}

\begin{remark}
	\label{rmk cateogrical entropy is non negative}
	We note that $h_{cat}(\Phi) = h_{cat}(\Phi;0) \geq 0$ by definition.
\end{remark}

Let $\mathcal{D}$ be a fully faithful subcategory of $\mathcal{C}$ such that 
\begin{itemize}
	\item $\mathcal{D}$ is a triangulated category, and
	\item the restriction of $\Phi$ to $\mathcal{D}$ defines an auto-equivalence on $\mathcal{D}$, i.e, $\Phi(\mathcal{D}) \subset \mathcal{D}$.  
\end{itemize}

It is known that there exists a localisation functor $l$ 
\[l : \mathcal{C} \to \mathcal{C}/\mathcal{D}.\] 
See \cite{Drinfeld}. 
Then, $\Phi$ induces an auto-equivalence defined on $\mathcal{C}/\mathcal{D}$ uniquely up to natural transformations.
More precisely, there exists a unique (up to natural transformation) dg functor
\[\Phi_{\mathcal{C}/\mathcal{D}}: \mathcal{C}/\mathcal{D}\to\mathcal{C}/\mathcal{D},\]
satisfying
\[\Phi_{\mathcal{C}/\mathcal{D}}\circ l=l\circ \Phi.\]

To be clear, let us use the following notations $\Phi_\mathcal{C}, \Phi_\mathcal{D}$, and $\Phi_{\mathcal{C}/\mathcal{D}}$,
\[\Phi_\mathcal{C}:= \Phi:\mathcal{C} \to \mathcal{C}, \Phi_\mathcal{D}:= \Phi_\mathcal{C}|_\mathcal{D}: \mathcal{D} \to \mathcal{D}, \text{ and } \Phi_{\mathcal{C}/\mathcal{D}}: \mathcal{C}/\mathcal{D} \to \mathcal{C}/\mathcal{D}.\]
Then, \cite[Theorem 3.8]{Bae-Choa-Jeong-Karabas-Lee} compares the categorical entropies of $\Phi_\mathcal{C}, \Phi_\mathcal{D}$, and $\Phi_{\mathcal{C}/\mathcal{D}}$.

\begin{lem}[Theorem 3.10 of \cite{Bae-Choa-Jeong-Karabas-Lee}]
	\label{lemma comparision of categorical entropies}
	The categorical entropies of $\Phi_\mathcal{C}, \Phi_\mathcal{D}, \Phi_{\mathcal{C}/\mathcal{D}}$ satisfy
	\[h_{cat}(\Phi_{\mathcal{C}/\mathcal{D}}) \leq h_{cat}(\Phi_\mathcal{C}) \leq \max \{h_{cat}(\Phi_\mathcal{D}), h_{cat}(\Phi_{\mathcal{C}/\mathcal{D}})\}.\]
\end{lem}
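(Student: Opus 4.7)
The plan is to prove the two inequalities separately. For the first inequality $h_{cat}(\Phi_{\mathcal{C}/\mathcal{D}}) \leq h_{cat}(\Phi_{\mathcal{C}})$, I would fix a split-generator $G$ of $\mathcal{C}$ and use the localisation functor $l:\mathcal{C}\to\mathcal{C}/\mathcal{D}$ from Proposition \ref{prop induced functor on localisation}. Since $l$ is an exact dg functor and a Verdier quotient, $l(G)$ split-generates $\mathcal{C}/\mathcal{D}$, and any iterated-cone presentation realising $\delta_0(G;\Phi^n(G))$ maps under $l$ to an iterated-cone presentation of $l(\Phi^n(G))=\Phi_{\mathcal{C}/\mathcal{D}}^n(l(G))$ in terms of shifts of $l(G)$, with the same shifts and the same number of terms. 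This yields $\delta_0(l(G);\Phi_{\mathcal{C}/\mathcal{D}}^n(l(G)))\leq \delta_0(G;\Phi^n(G))$, and the first inequality follows by taking logs, dividing by $n$, and letting $n\to\infty$.

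For the second inequality, which is where the real content lies, I would choose a split-generator $G_{\mathcal{D}}$ of $\mathcal{D}$, pick a lift $\tilde{G}\in\mathcal{C}$ of a split-generator of $\mathcal{C}/\mathcal{D}$, and use $G:=G_{\mathcal{D}}\oplus\tilde{G}$ as a split-generator of $\mathcal{C}$. Splitting off the direct summand, it is enough to bound $\delta_0(G;\Phi^n(\tilde{G}))$ and $\delta_0(G;\Phi^n(G_{\mathcal{D}}))$ separately. The second of these is immediate: since $\Phi(\mathcal{D})\subset\mathcal{D}$, any iterated-cone presentation of $\Phi_{\mathcal{D}}^n(G_{\mathcal{D}})$ in $\mathcal{D}$ is also a valid presentation in $\mathcal{C}$, giving the bound by $\delta_0(G_{\mathcal{D}};\Phi_{\mathcal{D}}^n(G_{\mathcal{D}}))$.

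The main obstacle is bounding $\delta_0(G;\Phi^n(\tilde{G}))$ in terms of complexities in $\mathcal{C}/\mathcal{D}$ and $\mathcal{D}$. The idea is to start from an iterated-cone presentation of $\Phi_{\mathcal{C}/\mathcal{D}}^n(l(\tilde{G}))$ using shifts of $l(\tilde{G})$ that realises $\delta_0(l(\tilde{G});\Phi_{\mathcal{C}/\mathcal{D}}^n(l(\tilde{G})))$ up to a small error, lift the connecting morphisms to $\mathcal{C}$ by the definition of the Verdier quotient (roofs of morphisms with cones in $\mathcal{D}$), and form the corresponding iterated mapping cones in $\mathcal{C}$. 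The resulting object agrees with $\Phi^n(\tilde{G})$ only up to an object of $\mathcal{D}$, and this error must in turn be resolved by cones on $G_{\mathcal{D}}$; the number of such cones can be estimated by a $\delta_0$-complexity for $\Phi_{\mathcal{D}}$. Adding the two contributions and taking $\tfrac{1}{n}\log$ in the limit, the resulting bound becomes $\max\{h_{cat}(\Phi_{\mathcal{D}}),h_{cat}(\Phi_{\mathcal{C}/\mathcal{D}})\}$, since the logarithm of a sum of two exponentials is asymptotically the larger exponent.

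The hard part is controlling the $\mathcal{D}$-error uniformly in $n$, i.e.\ choosing the lifts so that the accumulated $\mathcal{D}$-term does not grow faster than the two exponential rates one is trying to bound. This is exactly the technical core of \cite[Theorem~3.8]{Bae-Choa-Jeong-Karabas-Lee}, and my proposed proof would follow their construction verbatim; the clean form of the estimate then comes from the elementary inequality $\log(a^n+b^n)\leq n\log\max\{a,b\}+\log 2$.
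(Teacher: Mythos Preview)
The paper does not give its own proof of this lemma: it is stated as a citation of \cite[Theorem~3.8]{Bae-Choa-Jeong-Karabas-Lee} and used as a black box (immediately afterwards, to prove Lemma~\ref{lemma wrapped vs partially wrapped}). So there is nothing in the present paper to compare your proposal against.

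As for the proposal itself, your outline of the first inequality is correct and standard: exactness of the localisation functor $l$ pushes forward any iterated-cone presentation, so $\delta_0(l(G);\Phi_{\mathcal{C}/\mathcal{D}}^n l(G))\le \delta_0(G;\Phi_{\mathcal{C}}^n G)$. Your strategy for the second inequality---choosing $G=G_{\mathcal{D}}\oplus\tilde G$, handling the $G_{\mathcal{D}}$-summand inside $\mathcal{D}$, and lifting a quotient-side presentation of $\Phi_{\mathcal{C}/\mathcal{D}}^n l(\tilde G)$ back to $\mathcal{C}$ up to an object of $\mathcal{D}$---is the right shape and matches the argument in the cited reference. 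The one place where your write-up is not yet a proof is the sentence ``the number of such cones can be estimated by a $\delta_0$-complexity for $\Phi_{\mathcal{D}}$'': the $\mathcal{D}$-error produced by lifting is an object of $\mathcal{D}$ that depends on $n$ but is not a priori of the form $\Phi_{\mathcal{D}}^n(\text{something fixed})$, so bounding its complexity by $h_{cat}(\Phi_{\mathcal{D}})$ requires an actual argument (this is precisely the technical core you defer to \cite{Bae-Choa-Jeong-Karabas-Lee}). Since you explicitly acknowledge this and point to the source, the proposal is an honest and accurate sketch rather than a self-contained proof.
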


Let $\hW$ be a Weinstein manifold. Then let $W \subset \hW$ be an associated Weinstein domain and let $\Lambda$ be a stop in $\partial W=\partial_\infty \hW$. Let $\phi: \hW \to \hW$ be a compactly supported exact symplectic automorphism and. Then $\phi$ induces functors $\Phi: \mathcal{W}(W) \to \mathcal{W}(W)$ and $\Phi_\Lambda: \mathcal{W}(W,\Lambda) \to \mathcal{W}(W,\Lambda)$. 
Thanks to Lemma \ref{lemma comparision of categorical entropies}, one can compare $h_{cat}(\Phi)$ and $h_{cat}(\Phi_\Lambda)$.

\begin{lem}[Theorem 1.2 of \cite{Bae-Choa-Jeong-Karabas-Lee}]
	\label{lemma wrapped vs partially wrapped} 
	The induced functors $\Phi$ and $\Phi_\Lambda$ have the same categorical entropy, i.e.,
	\[h_{cat}(\Phi) = h_{cat}(\Phi_\Lambda).\]
\end{lem}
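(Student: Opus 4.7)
The plan is to exhibit $\mathcal{W}(W)$ as a Verdier quotient of $\mathcal{W}(W,\Lambda)$ by a $\Phi_\Lambda$-invariant subcategory on which $\phi$ acts trivially, and then apply Lemma \ref{lemma comparision of categorical entropies}. Concretely, by the stop-removal theorem of Ganatra--Pardon--Shende, if $\mathcal{D}_\Lambda \subset \mathcal{W}(W,\Lambda)$ denotes the full triangulated subcategory split-generated by the linking disks of the stop $\Lambda$, then there is a localisation
\[ \mathcal{W}(W,\Lambda)/\mathcal{D}_\Lambda \;\simeq\; \mathcal{W}(W), \]
with localisation functor induced by forgetting the stop. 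This realises the triple $(\mathcal{D},\mathcal{C},\mathcal{C}/\mathcal{D}) = (\mathcal{D}_\Lambda,\mathcal{W}(W,\Lambda),\mathcal{W}(W))$ in the setting of Proposition \ref{prop induced functor on localisation}.

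Next I would verify that $\phi$ induces compatible auto-equivalences on all three categories. Because $\phi$ is compactly supported, it is the identity near the ideal boundary $\partial_\infty W$, so in particular it fixes the stop $\Lambda$ pointwise. Consequently, $\phi$ induces an auto-equivalence $\Phi_\Lambda$ of $\mathcal{W}(W,\Lambda)$, and $\Phi_\Lambda$ preserves $\mathcal{D}_\Lambda$: each linking disk $D$ of a component of $\Lambda$ coincides with $\phi(D)$ outside a compact set, hence $\phi(D)$ and $D$ are related by a compactly supported Hamiltonian isotopy (using that $\phi$ is an exact symplectic automorphism that equals the identity outside a compact subset of $W$). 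This gives $\Phi_\Lambda(D) \cong D$ in $\mathcal{W}(W,\Lambda)$ for every linking disk $D$, so the restriction $\Phi_{\mathcal{D}_\Lambda}$ is isomorphic to the identity functor.

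Having established that $\Phi_{\mathcal{D}_\Lambda} \simeq \mathrm{id}$, its categorical entropy vanishes: $h_{cat}(\Phi_{\mathcal{D}_\Lambda}) = 0$. Combining this with Proposition \ref{prop induced functor on localisation} (which identifies the induced functor on $\mathcal{W}(W,\Lambda)/\mathcal{D}_\Lambda \simeq \mathcal{W}(W)$ with $\Phi$) and applying Lemma \ref{lemma comparision of categorical entropies} yields
\[ h_{cat}(\Phi) \;\leq\; h_{cat}(\Phi_\Lambda) \;\leq\; \max\bigl\{ h_{cat}(\Phi_{\mathcal{D}_\Lambda}),\, h_{cat}(\Phi) \bigr\} \;=\; \max\bigl\{0,\, h_{cat}(\Phi)\bigr\}, \]
where the last equality uses $h_{cat}(\Phi)\geq 0$ from Remark \ref{rmk cateogrical entropy is non negative}. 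Both inequalities therefore collapse to equality, giving $h_{cat}(\Phi)=h_{cat}(\Phi_\Lambda)$.

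The step I expect to require the most care is the second one: verifying that $\phi$ honestly descends to compatible auto-equivalences preserving $\mathcal{D}_\Lambda$, and that its action on linking disks is trivial up to isomorphism in $\mathcal{W}(W,\Lambda)$. The rest is a direct citation of the stop-removal localisation and Lemma \ref{lemma comparision of categorical entropies}.
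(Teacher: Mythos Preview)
Your proposal is correct and follows essentially the same route as the paper: realise $\mathcal{W}(W)$ as the quotient $\mathcal{W}(W,\Lambda)/\mathcal{D}_\Lambda$ by the linking-disk subcategory, observe that the compactly supported $\phi$ acts trivially there so that $h_{cat}(\Phi_{\mathcal{D}_\Lambda})=0$, and then squeeze using Lemma~\ref{lemma comparision of categorical entropies} together with Remark~\ref{rmk cateogrical entropy is non negative}. Your write-up is in fact slightly more explicit than the paper's, which simply asserts that $\Phi|_{\mathcal{D}}$ is the identity; your Hamiltonian-isotopy justification for why each linking disk is fixed up to isomorphism is a welcome clarification.
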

\begin{proof}
	We note that
	\[\mathcal{W}(W) := \mathcal{W}(W,\Lambda)/\mathcal{D},\]
	where $\mathcal{D}$ means the full subcategory of $\mathcal{W}(W,\Lambda)$ generated by all linking disks. Here linking disks are certain cylindrical Lagrangian disks of $\hW$ associated to $\Lambda$, for which the intersection with the domain $W$ can be chosen to lie arbitrarily close to $\Lambda \subset \partial W$. See \cite[Section 5]{Ganatra-Pardon-Shende18} for a definition of linking disk.
	
	Since $\phi$ is compactly supported, the restriction of $\Phi$ on $\mathcal{D}$ is the identity functor. 
	Thus, the categorical entropy of $\Phi|_\mathcal{D}$ is zero. 
	
	We note that, as mentioned Remark \ref{rmk cateogrical entropy is non negative}, \[h_{cat}(\Phi), h_{cat}(\Phi_\Lambda) \geq 0.\]
	By applying Lemma \ref{lemma comparision of categorical entropies}, one has 
	\[0 \leq h_{cat}(\Phi_\Lambda) \leq h_{cat}(\Phi) \leq \max\{h_{cat}(\Phi_\Lambda), 0\} = h_{cat}(\Phi_\Lambda).\]
	This completes the proof.
\end{proof}

\begin{remark}
	In Section \ref{section introduction}, we used the notation $h_{cat}(\phi)$ to denote $h_{cat} (\Phi)$ where $\Phi$ is the induced auto-equivalence on the wrapped Fukaya category of $\hW$. 
	In the above, we have seen that $\phi$ induces an auto-equivalence $\Phi$ on the wrapped Fukaya category $\cW(W)$ and an auto-equivalence $\Phi_{\Lambda}$ on the partially wrapped Fukaya category $\cW(W,\Lambda)$.
	In order to avoid confusion, we let $h_{cat}(\Phi)$ (resp.\ $h_{cat}(\Phi_{\Lambda})$) denote the categorical entropy of $\Phi$ on $\cW(W)$ (resp.\ $\Phi_{\Lambda}$ on $\cW(W, \Lambda)$).
\end{remark}

\section{Crofton's inequality}
\label{section Crofton's inequality}
The goal of this section is to prove Lemma \ref{lemma Crofton's inequality} which plays a key role in the proof of Theorem \ref{thm main in introduction}. 
In order to prove Lemma \ref{lemma Crofton's inequality}, we construct a family of Lagrangian submanifolds satisfying some conditions in Lemma \ref{lemma Lagrangian tomograph}.
By using the family of Lagrangians, we prove Lemma \ref{lemma Crofton's inequality} in Section \ref{subsection Crofton's inequality}. 

\subsection{Lagrangian tomograph}
\label{subsection Lagrangian tomograph}
Let $(\hW,d\lambda)$ be a Weinstein manifold and let $W \subset \hW$ be a Weinstein domain in the rest of this section.
In many places of this paper, we consider pairs of Lagrangians in $\hW$ satisfying the following condition. 
\begin{definition}
	\label{def good pair}
	A pair of Lagrangians $(L_1, L_2)$ in $\hW$ is {\em good} if $L_1$ and $L_2$ are disjoint in the cylindrical part, i.e.,
	\[L_1 \cap L_2 \cap \left([1,\infty) \times \partial W \right) = \varnothing.\]
\end{definition} 

For a good pair of Lagrangians, we construct a {\em Lagrangian tomograph} in Lemma \ref{lemma Lagrangian tomograph}.
The original construction of Lagrangian tomograph is given in \cite[Section 5.2.3]{Cineli-Erman-Ginzburg}, and our construction is a slight modification of the original one.

\begin{lem}
	\label{lemma Lagrangian tomograph}
	Let $(L_1, L_2)$ be a good pair of Lagrangians in $\hW$. 
	Then, for any $\epsilon >0$ and sufficiently large $d \in \mathbb{N}$, there is a sufficiently small $\delta>0$ and a family of Lagrangians $\{L^s\}_{s \in B_\epsilon^d}$, where $B_\epsilon^d$ is a $d$-dimensional closed ball of radius $\delta$ in the Euclidean space, such that 
	\begin{enumerate}
		\item[(i)] $L_1$ and $L^s$ are Hamiltonian isotopic to each other for all $s \in B_\epsilon^d$, 
		\item[(ii)] $d_H(L_1, L^s) < \frac{\epsilon}{2}$ for all $s \in B^d_\epsilon$, and
		\item[(iii)] $L^s$ and $L_2$ intersect transversely for almost all $s \in B_\epsilon^d$. 
	\end{enumerate}
\end{lem}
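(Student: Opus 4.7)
The plan is to construct the family as $L^s = \phi^1_{H_s}(L_1)$, where $H_s$ is a compactly supported Hamiltonian depending linearly on $s \in B_\epsilon^d$, and then to verify property (iii) by parametric transversality. The key initial observation is that since $(L_1, L_2)$ is a good pair, the intersection locus $L_1 \cap L_2$ is contained in a compact set $K$ lying entirely inside the Weinstein domain $W \subset \hW$, so one only needs to perturb $L_1$ inside a compact neighborhood of $L_1 \cap K$.

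First I would cover a compact neighborhood $U \subset L_1$ of $L_1 \cap K$ by finitely many Darboux charts of $\hW$ and, using bump functions with prescribed coordinate derivatives in each chart, choose a finite collection of compactly supported smooth functions $H_1, \ldots, H_d$ on $\hW$ whose Hamiltonian vector fields $X_{H_1}, \ldots, X_{H_d}$ span $T_x \hW$ at every point $x \in U$. Set $H_s := \sum_{i=1}^d s_i H_i$ and define $L^s := \phi^1_{H_s}(L_1)$. Property (i) is then automatic. Property (ii) follows by rescaling: the map $s \mapsto \phi^1_{H_s}$ is $C^0$-continuous and equals the identity at $s=0$, so after shrinking the $H_i$'s (or equivalently the radius $\epsilon$) one obtains $d_H(L_1, L^s) < \epsilon/2$ uniformly for $s \in B_\epsilon^d$.

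Property (iii) is the substantive point, and is established by parametric transversality. Consider the evaluation map
\[ F : B_\epsilon^d \times L_1 \to \hW, \qquad F(s, x) := \phi^1_{H_s}(x). \]
Its derivative in the $s$-direction at $s = 0$ is the linear map $(v_1, \ldots, v_d) \mapsto \sum v_i X_{H_i}(x)$, which by construction is surjective onto $T_x \hW$ for every $x \in U$; shrinking $\epsilon$ preserves submersivity on a neighborhood of $\{|s| \leq \epsilon\} \times U$. Outside $U$ the perturbation cannot move $L_1$ into $L_2$ (provided $\epsilon$ is small enough that $L^s \cap L_2 \subset U$), so transversality is automatic there. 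Consequently $F^{-1}(L_2)$ is a smooth submanifold, and applying Sard's theorem to its projection to $B_\epsilon^d$ yields that for almost every $s$ the slice $L^s = F(\{s\} \times L_1)$ is transverse to $L_2$.

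The main obstacle is arranging the submersivity of $F$ simultaneously with the smallness required by (ii). This is a routine but delicate balance: one needs enough Hamiltonians to span all directions pointwise along a neighborhood of $L_1 \cap K$ (forcing $d$ to be at least $\dim \hW$, and typically larger in order to cover via several Darboux charts), while simultaneously rescaling them so that the resulting time-one flow displaces $L_1$ by less than $\epsilon/2$. Both conditions are resolved by the standard compactness and bump-function argument, which is available precisely because of the good-pair hypothesis on $(L_1, L_2)$.
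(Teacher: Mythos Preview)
Your argument is correct and follows the same overall strategy as the paper---construct a finite-dimensional family of perturbations of $L_1$ and apply parametric transversality/Sard---but the actual construction of the family is different. The paper works \emph{intrinsically on $L_1$}: it fixes a Weinstein neighborhood $L_1 \subset T^*L_1 \hookrightarrow \hW$, chooses compactly supported functions $g_1,\dots,g_d:L_1\to\mathbb{R}$ whose differentials span $T_x^*L_1$ over a compact set $W_0$ containing $L_1\cap L_2$, sets $f_s=\sum s_ig_i$, and takes $L^s$ to be the graph of $df_s$. The submersion condition there reads ``$\{dg_i(x)\}$ span $T_x^*L_1$'', so one only needs $d\geq \dim L_1$ (locally). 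Your construction instead uses \emph{ambient} Hamiltonians $H_i:\hW\to\mathbb{R}$ whose vector fields span $T_x\hW$ along $U$, which forces $d\geq \dim\hW = 2\dim L_1$; on the other hand property~(i) is completely tautological in your version, whereas in the paper it comes from the (standard but not stated) fact that the graph of an exact $1$-form is Hamiltonian isotopic to the zero section. The distance argument you sketch (``outside $U$ the perturbation cannot move $L_1$ into $L_2$ for small $\epsilon$'') is exactly the role played by the constant $\ell$ in the paper's Equation~(3.2); you might make that step a bit more explicit, since it relies on the compactness of $(L_1\setminus U)\cap\operatorname{supp}(H_i)$ and the positive distance from that set to $L_2$. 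Both approaches feed equally well into the subsequent Crofton inequality, with your evaluation map $F$ playing the role of the paper's $\Psi$.
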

We note that the radius of the ball $B_\epsilon^d$ in the above lemma is $\delta$, not $\epsilon$. 
The reason why we use the notation is explained in Remark \ref{rmk choice of the family}.

Before going further, we briefly review the notion of {\em Hofer norm $d_H$ of a Hamiltonian isotopy}, which appears in the condition (ii) of Lemma \ref{lemma Lagrangian tomograph}.
Let $\varphi$ be a compactly supported Hamiltonian isotopy. 
Then, the Hofer norm of $\varphi$ is defined as 
\[\lVert \varphi \rVert_{Hofer} := \inf_H \int_{S^1} (\max_M H_t - \min_M H_t)dt,\]
where the infimum is taken over all $1$-periodic, time-dependent Hamiltonian $H$ generating $\varphi$. 
Moreover, one can define the Hofer distance between two Hamiltonian isotopic Lagrangians $L$ and $L'$ as 
\[d_H(L,L'):= \inf\{\lVert \varphi \rVert_{Hofer} | \varphi(L)= L'\}.\]

\begin{proof}[Proof of Lemma \ref{lemma Lagrangian tomograph}]
	Since $(L_1,L_2)$ is a good pair, there is a submanifold $W_0 \subset W$ of codimension 0 whose closure is compact such that 
	\[L_1 \cap L_2 \subset \mathrm{Int}(W_0) \subset W_0 \subset \mathrm{Int}(W),\]
	where $\mathrm{Int}(W_0)$ and $\mathrm{Int}(W)$ denote the interiors of $W_0$ and $W$ respectively. 
	Then, we choose a collection of real-valued functions 
	\[\{g_1, \dots, g_d | g_i: L_1 \to \mathbb{R}\},\]
	satisfying 
	\begin{enumerate}
		\item[(A)] $g_i(x)=0$ if $x \in L_1 \setminus W$, and
		\item[(B)] for all $x \in L_1 \cap W_0$, the cotangent fiber $T^*_xL_1$ is generated by $\{dg_i(x) | i =1, \dots, d\}$. 
	\end{enumerate}
	For any $s = (s_1, \dots, s_d) \in \mathbb{R}^d$, We set 
	\begin{gather*}
		f_s: L_1 \to \mathbb{R},\\
		x \mapsto s_1 g_1(x) + \dots + s_d g_d(x).
	\end{gather*}
	
	We note that there is a small neighborhood $U$ of $L_1$ in $\hW$, which is symplectomorphic to a small disk cotangent bundle of $L_1$. We call $U$ a Weinstein neighborhood of $L_1$.
	Then for $s \in \mathbb{R}^d$ such that $\lVert s \rVert \ll 1$, one can assume that the graph of $df_s$ is embedded into $\hW$. 
	Let $L^s$ be the embedded image of the graph of $df_s$ in $\hW$. 
	By the construction of $L^s$, (i) holds obviously. 
	
	Then, one can observe that (ii) holds for all $s \in \mathbb{R}^d$ sufficiently close to the origin. 
	Let $\delta >0$ be a sufficiently small number such that if $\lVert s \rVert < \delta$, then (ii) holds. 	
	Let $B_\epsilon^d \subset \mathbb{R}^d$ be the ball of radius $\delta$ centered at the origin.
	
	In order to prove (iii), we show that the following map $\Psi$ is submersive at every point of $\Psi^{-1}(L_2)$: 
	\begin{gather}
		\label{eqn psi}
		\Psi: B^d_\epsilon \times L_1 \to \hW, \\
		\notag (s,x) \mapsto df_s(x).
	\end{gather}
	In other words, if $\Psi(s,x) \in L_2$ for some $(s,x) \in B^d_\epsilon \times L_1$, we show that 
		\[D\Psi_{(s,x)}: T_{(s,x)}\left(B^d_\epsilon \times L_1\right) \simeq T_s B^d_\epsilon \oplus T_x L_1 \to T_{\Psi(s,x)}\hW\]
		is surjective. Then it will follow that $L^s$ and $L_2$ intersect transversely for almost all $s\in B_\epsilon^d$, i.e., (iii) holds.
	
	We equip $\hW$ with a Riemannian metric $g$ compatible with the Weinstein neighborhood of $L_1$ in the following sense. Indeed, note that any Riemannian metric $g_{L_1}$ on $L_1$ determines a natural Riemannian metric on its contangent bundle $T^*L_1$. Then identifying the Weinstein neighborhood $U$ with a disk subbundle of $T^*L_1$, we require that the restriction of $g$ to $U$ coincides with the unique natural Riemmanian metric induced by the Riemannian metric $g_{L_1} := g|_{L_1}$ in the above sense. This is always possible.
	
	Now let 
	\begin{gather}
		\label{eqn minimum distance}
		\ell := \min\left\{ d(x,y)| x \in L_1 \cap \left(W \setminus \mathrm{Int}(W_0)\right), y \in L_2 \cap \left(W \setminus \mathrm{Int}(W_0)\right)\right\},
	\end{gather}
	where $d(x,y)$ is the distance function with respect to the Riemannian metric $g$.
	Since both $L_1\cap (W\setminus \mathrm{Int}(W_0))$ and $L_2\cap (W\setminus \mathrm{Int}(W_0))$ are compact and $L_1 \cap L_2 \subset \mathrm{Int}(W_0)$, $\ell$ is well-defined and positive.
	
	Consider the Riemannian metric on $L_1$ given by the restriction of $g$. Let us denote by $\lVert \cdot \rVert$ the corresponding norm on $T^*_xL_1$.
		If the radius $\delta$ is sufficiently small, then $\lVert df_s(x) \rVert < \ell$ for all $(s,x) \in B^d_\epsilon \times W$ since $g_i$'s are compactly supported. We assume that $\delta$ is sufficiently small in this sense in the rest of the proof. 
	
		Now assume that $\Psi(s,x) \in L_2$ for some $(s,x)\in B^{d}_\epsilon \times L_1$.
		If $\Psi(s,x) \in \hW \setminus W$, then $\Psi(s,x) \in L_1$ since $g_i$'s are assumed to be zero over $\hW \setminus W$ by (A). 
		This contradicts to the assumption $L_1 \cap L_2 \subset W_0 \subset W$. Otherwise, if $\Psi(s,x) \in W \setminus \mathrm{Int}(W_0)$, then $d(\Psi(s,x),x) = \lVert df_s(x) \rVert < \ell$, which follows from our choice of the Riemannian metric $g$. But, this contradicts to Equation \eqref{eqn minimum distance}.
		
		The above paragraph shows that if $\Psi(s,x) \in L_2$, then $\Psi(s,x) \in W_0$, i.e., $L^s \cap L_2 \subset W_0$. 
		By the assumption (B), this means that $\Psi$ is submersive at every point in $\Psi^{-1}(L_2)$.
\end{proof}

\begin{remark}
	\label{rmk choice of the family}
	We also note that the the radius $\delta$ of $B^d_\epsilon$ in Lemma \ref{lemma Lagrangian tomograph} is determined by $\ell, \epsilon$, and the collection $\{g_1, \dots, g_d\}$.
	Among these factors, we would like to emphasize the effect of $\epsilon$ since in Section \ref{section barcode entropy}, we will vary $\epsilon$ and observe the effect to define the notion of {\em barcode entropy}.
	It is the reason why we use the notation $B_\epsilon^d$. 
\end{remark}

\subsection{Crofton's inequality}
\label{subsection Crofton's inequality}
In Section \ref{subsection Crofton's inequality}, we prove Lemma \ref{lemma Crofton's inequality}, i.e., a Crofton type inequality, which plays a key role in the proof of Theorem \ref{thm main in introduction}.

In order to state Lemma \ref{lemma Crofton's inequality}, we need some preparation. 
For $s \in B^d_\epsilon$ such that $L^s \pitchfork L_2$, let 
\[N(s) := |L^s \cap L_2|.\]
Then, $N(s)$ is finite for almost all $s \in B^d_\epsilon$. 
Moreover, $N(s)$ is an integrable function on $B^d_\epsilon$.

Since $B^d_\epsilon \subset \mathbb{R}^d$, $B^d_\epsilon$ carries the standard Euclidean metric. 
Let $ds$ be the volume form on $B^d_\epsilon$ induced by the Euclidean metric.

Let 
\[E := \Psi^{-1}(W_0).\]
Then, let us fix a metric $g_E$ on $E$ such that the restriction of $D\Psi$ to the normals to $\Psi^{-1}(y), y \in W$ is an isometry.
Since $\Psi$ is a proper submersion, $\Psi$ is a locally trivial fibration by Ehresmann's fibration theorem \cite{Ehresmann}.
Thus, the existence of such a metric is guaranteed. 

Now, we state Lemma \ref{lemma Crofton's inequality}. 
\begin{lem}
	\label{lemma Crofton's inequality}
	The following inequality holds:
	\begin{equation}
		\label{eqn Crofton's inequality}
			\int_{B^d_\epsilon} N(s) ds \leq C \cdot \vol(L_2 \cap W),
	\end{equation}
	where $C$ is a constant depending only on $\Psi, ds$, the fixed metric $g$ on $\hW$, and the fixed metric $g_E$ on $E$. 
\end{lem}
\begin{proof}
	Let $\Sigma:= \Psi^{-1}(L_2 \cap W)$. 
	Then, by definition, for all $s \in B_{\epsilon}^d$ such that $L^s \pitchfork L_2$,	one has 
	\[|(s \times L_1) \cap \Sigma| = |L^s \cap L_2| = N(s).\]
	Note that in the proof of Lemma \ref{lemma Lagrangian tomograph}, we have
	\[L^s \cap L_2 \subset W_0 \subset W,\]
	by choosing a sufficiently small $B^d_\epsilon$.

	We recall that $B^d_\epsilon$ carries the Euclidean metric and $L_1$ also carries a metric $g|_{L_1}$. 
	Thus, $B^d_\epsilon \times L_1$ carries a product metric. 
	On $E$, the restriction of the product metric gives another metric that does not need to be the same as $g_E$. 
	
	Let $\pi : E \hookrightarrow B^d_\epsilon \times L_1 \to B^d_\epsilon$ be the projection to the first factor. 
	Then, if $\vol_1(\cdot)$ denotes the volume with respect to the product metric on $E$, one has 
	\begin{gather}
		\label{eqn vol1}
		\int_{B^d_\epsilon} N(s) ds = \int_{B^d_\epsilon} |(s \times L_1) \cap \Sigma |ds = \int_{\Sigma} \pi^*ds \leq \vol_1(\Sigma).
	\end{gather}

	Let $\vol(\cdot)$ (resp.\ $\vol_2(\cdot)$) denote the volume with respect to the fixed metric $g$ (resp.\ $g_E$) on $W$ (resp.\ $E$). 
	Then, by Fubini theorem, one has 
	\begin{gather}
		\label{eqn vol2}
		\vol_2(\Sigma) = \int_{L_2 \cap W} \vol_2 \left(\Psi^{-1}(y)\right) dy|_{L_2} \leq \max_{y \in \Psi(E)} \vol_2\left(\Psi^{-1}(y)\right) \cdot \vol(L_2 \cap W). 
	\end{gather}

	We note that since $E$ is compact, 
	\begin{gather}
		\label{eqn vol1 vs vol2}
		\vol_1(\Sigma) \leq C_0 \cdot \vol_2(\Sigma),
	\end{gather}	
	where $C_0$ is a constant depending only on $g_E$ and the product metric on $E$. 
	
	By combining Equations \eqref{eqn vol1} -- \eqref{eqn vol1 vs vol2}, one concludes that 
	\[\int_{B^d_\epsilon} N(s) ds \leq C \cdot \vol(L_2 \cap W),\]
	where $C$ is a constant depending only on $\Psi, ds, g$, and $g_E$.
\end{proof}
\begin{remark}
	\label{rmk Crofton inequality}
	We note that Lemma \ref{lemma Crofton's inequality} is a slight modification of \cite[Lemma 5.3]{Cineli-Erman-Ginzburg}. 
	The original Crofton's inequality is proven in \cite{Arnold90a, Arnold90b}. 
	The reader can find additional context on Crofton's inequality in \cite[Section 5.2.1]{Cineli-Erman-Ginzburg}, especially after \cite[Remark 5.4]{Cineli-Erman-Ginzburg}, and in references therein. 
\end{remark}

\section{Categorical vs topological entropy}
\label{section categorical vs topological entropy}
In this Section, we prove our main theorem comparing categorical and topological entropy. 
To be more precise, let $(\hW,d\lambda)$ be a Weinstein manifold and let $W\subset \hW$ be an associated Weinstein domain. Let $\phi:\hW \to \hW$ be a compactly supported exact symplectic automorphism of $\hW$. 
Let $\Phi:\cW(W) \to \cW(W)$ denote the functor induced by $\phi$.
Then, we prove Theorem \ref{thm main}.

\begin{thm}[=Theorem \ref{thm main in introduction}]
	\label{thm main}
	The categorical entropy of $\Phi$ bounds the topological entropy of $\phi$ from below, i.e.,
	\[h_{cat}(\Phi) \leq h_{top}(\phi).\]
\end{thm}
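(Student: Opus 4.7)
The plan follows the outline sketched in the introduction. First, by Lemma \ref{lemma wrapped vs partially wrapped} I may replace $\cW(\hat{W})$ by a fully stopped partially wrapped Fukaya category $\cW(\hat{W}, \Lambda)$ without changing the categorical entropy. This category is both smooth (by generation-by-cocores results) and proper (because fully stopped), so \cite[Theorem 2.6]{Dimitrov-Haiden-Katzarkov-Kontsevich} at $t=0$ applies and gives
\[ h_{cat}(\Phi_\Lambda) \;=\; \limsup_{n \to \infty} \frac{1}{n} \log \dim \mathrm{Hom}^*_{\cW(\hat{W},\Lambda)}\!\bigl(G, \Phi_\Lambda^n(G)\bigr) \]
for a split-generator $G$, which I would represent geometrically by a (possibly disconnected) Lagrangian $L$ with cylindrical ends built from cocores ending on $\Lambda$.

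Next I would translate the right-hand side into a geometric intersection count and apply Crofton. Choose once and for all a compact set $W_0 \subset W$ containing $\mathrm{supp}(\phi)$, and perturb $L$ by a compactly supported Hamiltonian supported near the cylindrical end to obtain $L_1$ such that $(L_1, L)$ is a good pair in the sense of Definition \ref{def good pair}, with $L_1 \cap L \subset \mathrm{Int}(W_0)$. Because $\phi$ is the identity outside $W_0$, and in particular on the cylindrical part, the pair $(L_1, \phi^n(L))$ is good for every $n$ and all intersections $L_1 \cap \phi^n(L)$ stay inside the single compact set $W_0$. Apply Lemma \ref{lemma Lagrangian tomograph} once to $(L_1, L)$ to produce a family $\{L_1^s\}_{s \in B_\epsilon^d}$, and plug the same family into Lemma \ref{lemma Crofton's inequality} for each pair $(L_1, \phi^n(L))$ to obtain
\[ \int_{B_\epsilon^d} \bigl| L_1^s \cap \phi^n(L) \bigr| \, ds \;\leq\; C \cdot \vol\bigl(\phi^n(L) \cap W_0\bigr), \]
with a constant $C$ depending only on the tomograph and the fixed metrics, hence independent of $n$. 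Choosing $s_n \in B_\epsilon^d$ for which the integrand is at most the average, and using $\dim HF^*(L, \phi^n(L)) \leq |L_1^{s_n} \cap \phi^n(L)|$, yields
\[ \dim \mathrm{Hom}^*\bigl(G, \Phi_\Lambda^n(G)\bigr) \;\leq\; \frac{C}{\vol(B_\epsilon^d)} \cdot \vol\bigl(\phi^n(L) \cap W_0\bigr). \]

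Finally I would close with Yomdin's inequality. Set $Y := L \cap W_0$, a compact submanifold with boundary. Because $\phi$ is the identity outside $W_0$ an easy induction gives $\phi^n(L) \cap W_0 \subset \phi^n(Y)$, and then $\vol(\phi^n(Y)) \leq \vol(\Gamma_{\phi|_Y}^{n+1})$ by projection onto the last factor of the product metric on $\hat{W}^{n+1}$. Thus Proposition \ref{prop Yomdin thm} yields
\[ \limsup_{n\to\infty} \frac{1}{n} \log \vol\bigl(\phi^n(L) \cap W_0\bigr) \;\leq\; h_{top}(\phi), \]
which combined with the previous estimate and Lemma \ref{lemma wrapped vs partially wrapped} gives $h_{cat}(\Phi) \leq h_{top}(\phi)$. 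The main obstacle in this program is the uniformity-in-$n$ of the perturbation data: the good-pair perturbation $L_1$, the compact set $W_0$ containing every intersection $L_1 \cap \phi^n(L)$, and the tomograph parameter ball $B_\epsilon^d$ together with the Crofton constant $C$ must all be chosen once and for all, and this is exactly where the compact support of $\phi$ is essential — without it, no single tomograph would work simultaneously for every $\phi^n(L)$, and the constant $C$ could degenerate along the sequence.
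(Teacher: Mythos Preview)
Your approach is essentially the paper's own proof: pass to a fully stopped partially wrapped category, invoke \cite[Theorem 2.6]{Dimitrov-Haiden-Katzarkov-Kontsevich}, bound the resulting $\dim\mathrm{Hom}$ by a volume via the Lagrangian tomograph and the Crofton inequality (Lemmas~\ref{lemma Lagrangian tomograph} and~\ref{lemma Crofton's inequality}), and finish with Proposition~\ref{prop Yomdin thm}. Your discussion of uniformity-in-$n$ is exactly the point the paper spends a paragraph on, and your endgame with $Y=L\cap W_0$ is a slightly more explicit version of the paper's equation~\eqref{eqn proof 6}.

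There is one technical imprecision worth flagging. You say you ``perturb $L$ by a compactly supported Hamiltonian supported near the cylindrical end'' to obtain $L_1$ with $(L_1,L)$ a good pair. If the Hamiltonian is genuinely compactly supported then $L_1$ and $L$ share the same Legendrian end, so they coincide along the entire cylindrical part and $(L_1,L)$ cannot be a good pair. More importantly, for the step $\dim\mathrm{Hom}^*_{\cW(\hat W,\Lambda)}(G,\Phi_\Lambda^n G)\le |L_1^{s_n}\cap\phi^n(L)|$ you implicitly need that the partially wrapped morphism space is computed by the \emph{unwrapped} chain complex $CF(L_1^{s_n},\phi^n(L))$. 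This is precisely the paper's condition (B), and it requires $L_1=\varphi_0(L)$ to be a genuine positive wrapping of $L$ in the complement of the stop, not a compactly supported perturbation. Because the category is fully stopped, a \emph{finite} amount of wrapping suffices, and since $\phi$ is the identity on the cylindrical end the same $\varphi_0$ works for every $n$; once this is said, the rest of your argument goes through verbatim.
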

\begin{proof}
In order to prove Theorem \ref{thm main}, we recall that every Weinstein manifold $\hW$ admits a Lefschetz fibration $\pi: \hW \to \mathbb{C}$ by \cite{Giroux-Pardon}.
Then, $\pi$ defines a Fukaya-Seidel category.
Moreover, it is known by \cite{Ganatra-Pardon-Shende18} that the corresponding Fukaya-Seidel category is the partially wrapped Fukaya category with the stop $\Lambda = \pi^{-1}(-\infty)$.   
Also, it is known that the {\em Lefschetz thimbles} of $\pi$ generate $\cW(W, \Lambda)$.  
Let $G$ denote the generating Lagrangian submanifold. 

We note that wrapping a Lagrangian $G$ means taking a time-$t$ flow of a Hamiltonian vector field $X_H$ for positive time $t$, where $H: \hW = W \cup ([1, \infty] \times \partial W) \to \mathbb{R}$ is a Hamiltonian function satisfying the following: 
\[H(x) = \begin{cases}
	0 \text{  if  } x \in W, \\
	r \text{  if  } x = (r, p) \in [1+\epsilon_0,\infty] \times \partial W \text{  with  } 0 < \epsilon_0 \ll 1.
\end{cases}
\]
Since $\cW(W,\Lambda)$ is fully stopped, there exists a positive number $t_0$ such that the time-$t_0$ flow of $X_H$, denoted by $\varphi_0$, satisfies that
\begin{enumerate}
	\item[(A)] $\left(\varphi_0(G),\phi^n(G)\right)$ is a good pair for all $n \in \mathbb{N}$, and
	\item[(B)] $HW_\Lambda\left(G,\phi^n(G)\right) = HF\left(\varphi_0(G),\phi^n(G)\right)$ for all $n \in \mathbb{N}$.
\end{enumerate}
See Figure \ref{figure wrapping_example}. 

\begin{figure}[h]
	\centering
\begingroup%
  \makeatletter%
  \providecommand\color[2][]{%
    \errmessage{(Inkscape) Color is used for the text in Inkscape, but the package 'color.sty' is not loaded}%
    \renewcommand\color[2][]{}%
  }%
  \providecommand\transparent[1]{%
    \errmessage{(Inkscape) Transparency is used (non-zero) for the text in Inkscape, but the package 'transparent.sty' is not loaded}%
    \renewcommand\transparent[1]{}%
  }%
  \providecommand\rotatebox[2]{#2}%
  \newcommand*\fsize{\dimexpr\f@size pt\relax}%
  \newcommand*\lineheight[1]{\fontsize{\fsize}{#1\fsize}\selectfont}%
  \ifx\svgwidth\undefined%
    \setlength{\unitlength}{283.46456693bp}%
    \ifx\svgscale\undefined%
      \relax%
    \else%
      \setlength{\unitlength}{\unitlength * \real{\svgscale}}%
    \fi%
  \else%
    \setlength{\unitlength}{\svgwidth}%
  \fi%
  \global\let\svgwidth\undefined%
  \global\let\svgscale\undefined%
  \makeatother%
  \begin{picture}(1,0.87)%
    \lineheight{1}%
    \setlength\tabcolsep{0pt}%
    \put(0,0){\includegraphics[width=\unitlength,page=1]{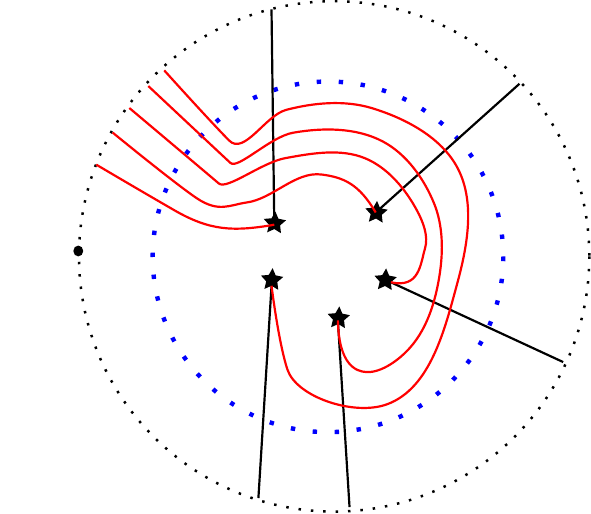}}%
    \put(0.05373871,0.43417442){\color[rgb]{0,0,0}\makebox(0,0)[lt]{\lineheight{1.25}\smash{\begin{tabular}[t]{l}$-\infty$\end{tabular}}}}%
  \end{picture}%
\endgroup%

	\caption{The interior of the black dotted circle is the base of a Lefschetz fibration $\pi$. The star marked points are the singular values and the black dot is $-\infty$. We note that the stop $\Lambda$ is given by $\Lambda = \pi^{-1}(-\infty)$. One can choose $G$ such that $\pi(G)$ is the union of all black curves.
	Similarly, $\pi\left(\varphi_0(G)\right)$ is the union of all red curves. 	Let $\pi(W)$ be contained in the interior of blue dotted circle. Then, $\left(\varphi_0(G),\phi^n(G)\right)$ is a good pair for all $n \in \mathbb{Z}$.	}
	\label{figure wrapping_example}
\end{figure}

For a given $n \in \mathbb{N}$, we apply Lemma \ref{lemma Lagrangian tomograph} for the good pair of Lagrangians $\left(\varphi_0(G),\phi^n(G)\right)$. 
Then, there exists a family of Lagrangian $\{L^s\}_{s \in B^d_\epsilon}$ such that 
\begin{enumerate}
	\item[(i)] $\varphi_0(G)$ and $L^s$ are Hamiltonian isotopic to each other for all $s \in B_\epsilon^d$, 
	\item[(ii)] $d_H(\varphi_0(G), L^s) < \frac{\epsilon}{2}$, and
	\item[(iii)] $L^s \pitchfork \phi^n(G)$ for almost all $s \in B_\epsilon^d$. 
\end{enumerate}

We note that one can find a family $\{L^s\}_{s \in B^d_\epsilon}$ which does not depend on $n$. 
To be more precise, we remark that in the proof of Lemma \ref{lemma Lagrangian tomograph}, the construction of $\{L^s\}_{s \in B^d_\epsilon}$ depends only on $\epsilon$, a collection of functions $\{g_1, \dots, g_d\}$, and $\ell$ in Equation \eqref{eqn minimum distance}. 
Since $\epsilon$ is a fixed, sufficiently small positive number, $\epsilon$ is independent of $n$. 
Similarly, $\{g_1, \dotsm g_d\}$ is a collection of functions not depending on $n$. 

We recall that in order to define the value $\ell$ in Equation \eqref{eqn minimum distance}, we had to fix $W_0 \subset W$ such that 
\[W_0 \subset \mathrm{Int}(W) \text{  and  } \varphi_0(G) \cap \phi^n(G) \subset \mathrm{Int}(W_0).\]
Without loss of generality, one can assume that $W_0$ not only satisfies the above two conditions, but also contains the support of $\phi$. 
Then, outside of $W_0$, $\phi^{n_1}(G)$ and $\phi^{n_2}(G)$ agree for all $n_i \in \mathbb{N}$. 
Thus, $\ell$ in Equation \eqref{eqn minimum distance} does not depend on $n$. 

Since we have a family $\{L^s\}_{s \in B^d_\epsilon}$ not depending on $n$, one can define the following function 
\[N_n(s) := |L^s \cap \phi^n(G)|.\]
We point out that for each $n \in \mathbb{N}$, $N_n(s)$ is an integrable function because of (iii).

By applying Lemma \ref{lemma Crofton's inequality}, we have 
\begin{gather}
	\label{eqn proof 1}
	\int_{B^d_\epsilon} N_n(s) ds \leq C \cdot \vol\left(\phi^n(G) \cap W\right).
\end{gather}
We note that the constant $C$ in \eqref{eqn proof 1} is independent of $n$. 

On the other hand, for $s \in B_{\epsilon}^d$ such that $L^s$ and $ \phi^n(G)$ intersect transversely, we have 
\begin{gather}
	\label{eqn proof 2}
	\mathrm{dim}HW_\Lambda\left(G,\phi^n(G)\right) = \mathrm{dim}HF\left(\varphi_0(G),\phi^n(G)\right)= \mathrm{dim}HF\left(L^s,\phi^n(G)\right) \leq N_n(s).
\end{gather}
The first equality holds because of (B), the second equality holds because of (i), and the last inequality holds because of the definition of Lagrangian Floer homology. 

Since \eqref{eqn proof 2} holds for almost every $s\in B_{\epsilon}^d$, by integrating Equation \eqref{eqn proof 2}, we have
\begin{gather}
	\label{eqn proof 3}
	\vol(B^d_\epsilon) \cdot \mathrm{dim}HW_\Lambda\left(G,\phi^n(G)\right) = \int_{B^d_\epsilon} \mathrm{dim}HW_\Lambda\left(G,\phi^n(G)\right) ds \leq \int_{B^d_\epsilon} N_n(s) ds.
\end{gather}

From two inequalities \eqref{eqn proof 1} and \eqref{eqn proof 3}, one has 
\begin{gather}
	\label{eqn proof 4}
	\vol(B^d_\epsilon) \cdot \mathrm{dim}HW_\Lambda\left(G,\phi^n(G)\right) \leq C \cdot \vol\left(\phi^n(G) \cap W\right).
\end{gather}
By taking $\limsup_{n \to \infty} \frac{1}{n} \log^+$ for the both hand sides of \eqref{eqn proof 4}, one has 
\begin{align}
	\label{eqn proof 5}
	h_{cat}\left(\Phi_\Lambda : \cW(W,\Lambda) \to \cW(W,\Lambda)\right) &= \limsup_{n\to\infty} \frac{1}{n} \log \dim HW_\Lambda\left(\varphi_0(G),\phi^n(G)\right) \\
	\notag &\leq \limsup_{n \to \infty} \frac{1}{n} \log \vol\left(\phi^n(G) \cap W\right).
\end{align}
The first equality in Equation \eqref{eqn proof 5} holds because of \cite[Theorem 2.6]{Dimitrov-Haiden-Katzarkov-Kontsevich} and because $\cW(W, \Lambda)$ is smooth and proper. 

Lemma \ref{lemma wrapped vs partially wrapped} says that
\[h_{cat}(\Phi) = h_{cat}(\Phi_\Lambda).\]
Also the following holds by Theorem \ref{thm Yomdin thm},
\begin{gather}
	\label{eqn proof 6}
	\limsup_{n \to \infty} \frac{1}{n} \log \vol\left(\phi^n(G) \cap W\right) \leq h_{top}(\phi).
\end{gather}
Then, \eqref{eqn proof 5} and \eqref{eqn proof 6} complete the proof.
\end{proof} 

\begin{remark}
	\label{rmk FS category}
	\mbox{}
	\begin{enumerate}
		\item In the proof of Theorem \ref{thm main}, we fix a Lefschetz fibration, and we use the corresponding Fukaya-Seidel category. 
		We note that if one fixes a {\em fully stopped} partially wrapped Fukaya category instead of a Fukaya-Seidel category, the same proof still works. 
		\item A direct application of Theorem \ref{thm main} is to show that the topological entropy of some symplectic automorphisms are positive.
		For example, \cite[Theorem 1.2]{Gong-Wang-Wue22} shows that symplectic automorphisms of a specific type, defined on the Milnor fibers of $A_n$-type, admit positive topological entropy. 
		For the symplectic automorphisms which \cite{Gong-Wang-Wue22} considers, one can check that their categorical entropy are positive by \cite[Themorem 7,14 and Lemma 8.5]{Bae-Choa-Jeong-Karabas-Lee}.
		Then, Theorem \ref{thm main} guarantees that the topological entropy is also positive. 
		We note that \cite{Gong-Wang-Wue22} shows their result by showing that the symplectic automorphisms have certain form of nonuniform hyperbolicity.
	\end{enumerate}
\end{remark}

\section{Examples}
\label{section examples}
In this section, we provide two examples.
The first example is a symplectic automorphism for which the inequality \eqref{eqn main inequality} is strict. 
The second example shows that categorical entropy can be strictly greater than the spectral radius of its induced map on the homology.

\subsection{The first example} 

Let $W$ be a 2-dimensional Weinstein domain such that $W \neq \mathbb{D}^2$. It is well-known that its wrapped Fukaya category is generated by the Lagrangian cocores, see \cite{Chantraine-Rizell-Ghiggini-Golovko17, Ganatra-Pardon-Shende18}. This ensures that the categorical entropy of an endo-functor on $\cW(W)$ is well-defined.

Let $U$ be a small open ball in $W$. It is well-known that there is a Hamiltonian diffeomorphism $\widetilde{\phi}: \overline{U} \to \overline{U}$ defined on the closure of $U$ such that
\begin{itemize}
	\item $\tilde{\phi}$ is the identity near the boundary of $\overline{U}$, and 
	\item $\tilde{\phi}$ has a positive topological entropy.
\end{itemize}  
Smale's horseshoe map is an example of such $\tilde{\phi}$. 

Since $\widetilde{\phi}$ is assumed to be the identity near the boundary of the closure of $U$, it admits a trivial extension to the whole Weinstein domain $W$, which we will call $\phi$. Since $\phi$ is a compactly supported Hamiltonian diffeomorphism, if we let $\Phi$ be its induced functor on $\mathcal{W}(W)$ as above, then we have
	\[h_{cat}(\Phi) = 0<h_{top}(\widetilde{\phi} = \phi|_{U}) \leq h_{top}(\phi).\]
	
Hence $\phi$ is an example showing that the inequality \eqref{eqn main inequality} can be strict.

\subsection{The second example}
Let $\phi:\hW \to \hW$ be a compactly supported exact symplectic automorphism of a Weinstein manifold $\hW$. Let $\phi_*: H_*(\hW) \to H_*(\hW)$ be the linear map on the homology of $\hW$ that $\phi$ induces.
We define the {\em spectral radius} of $\phi$ as the maximal absolute value of eigenvalues of $\phi_*$.  
Let $\Rad(\phi)$ denote the spectral radius of $\phi$. 
It is a well-known fact due to Yomdin \cite{Yomdin} that logarithm $\log \Rad(\phi)$ of the spectral radius is a lower bound of $h_{top}(\phi)$.
We refer the reader to \cite{Yomdin,Gromov2} for more details.
Since Theorem \ref{thm main} gives another lower bound of $h_{top}(\phi)$, i.e., $h_{cat}(\Phi)$, one can ask the relationship between two lower bounds of $h_{top}(\phi)$.
In this subsection, first, we give an example of $\phi$ such that 
\[\log \Rad(\phi) = 0 < h_{cat}(\phi).\]
The example shows that the categorical entropy could be an useful tool for proving positivity of topological entropy of symplectomorphisms. 
After introducing the example, we also introduce a construction of symplectomorphisms having positive topological entropy. 

To introduce the example, let $A$ and $B$ be $n$-dimensional spheres. 
Then let $\hW$ be the plumbing of the cotangent bundles $T^*A$ and $T^*B$ at a point.
In other words, $\hW$ is the Milnor fiber of $A_2$-type.

Let $\tau_A$ and $\tau_B$ be the Dehn twist defined on $W$ along $A$ and $B$, respectively.
Let us consider the symplectic automorphism on $\hW$ given by
	\[\phi = \tau_A \circ \tau_B^{-1}.\]
Now observe that the homology of $\hW$ is given by
	\[ H_*(\hW) = 
	\begin{cases} 
		\mathbb{Z} \langle [pt] \rangle &*=0, \\
		\mathbb{Z} \langle [A], [B]\rangle & *=n,\\
		0 &\text{otherwise.}
	\end{cases}\]

Since $\phi$ induces the trivial map on the zeroth homology $H_0(\hW)$, it is enough to consider its induced map on the $n$-dimensional homology $H_n(\hW)$ to compute $\Rad(\phi_*)$.

For that purpose, we consider the induced map of $\tau_A$ and $\tau_B$ on $H_n(\hW)$ separately.
\begin{enumerate}
	\item $(\tau_A)_*([A]) = (-1)^{n-1}[A]$.
	\item $(\tau_A)_*([B]) = [A]+[B]$.
	\item $(\tau_B)_*([A]) = [A] +(-1)^n [B]$.
	\item $(\tau_B)_*([B]) = (-1)^{n-1}[B]$. 
\end{enumerate}
In other words, $(\tau_A)_*$ and $(\tau_B)_*$ are represented by the matrices
	\[ (\tau_A)_* = 
	\begin{pmatrix} (-1)^{n-1} & 1 \\ 0 & 1\end{pmatrix} \text{  and  }
	(\tau_B)_* = 
	\begin{pmatrix} 1& 0 \\ (-1)^n & (-1)^{n-1}\end{pmatrix},
	\]
respectively.

Consequently, the map $\phi_* = (\tau_A)_* \circ (\tau_B^{-1})_*$ is represented by
\begin{align*}
	\begin{pmatrix} (-1)^{n-1} & 1 \\ 0 & 1 \end{pmatrix} \begin{pmatrix} 1& 0 \\ (-1)^n & (-1)^{n-1}\end{pmatrix}^{-1} &= \begin{pmatrix} (-1)^{n-1} & 1 \\ 0 & 1 \end{pmatrix} \begin{pmatrix} 1& 0 \\ 1 & (-1)^{n-1}\end{pmatrix}\\
	&= \begin{pmatrix}1+(-1)^{n-1}& (-1)^{n-1}\\ 1 & (-1)^{n-1} \end{pmatrix}.
\end{align*}

Let us now assume that $n$ is even. Then the above matrix is 
	\[
	\begin{pmatrix} 0 & -1\\ 1 & -1 \end{pmatrix}.
			\]
			
A straightforward computation shows that its eigenvalues are 
	\[ \frac{-1 + \sqrt{3} i}{2} \text{ and } \frac{-1- \sqrt{3}i}{2}.\]

Hence the spectral radius of $\phi_*$ is 
	\[ \left|\frac{-1+\sqrt{3}i}{2} \right| = \left|\frac{-1- \sqrt{3}i}{2} \right| =1.\]

On the other hand, \cite[Theorem 1.5]{Bae-Lee23} says that the categorical entropy of the induced map of $\phi$ on the wrapped Fukaya category is the same as its {\em stretching factor}.
Moreover, one can easily compute the stretching factor of $\phi$ by employing the techniques in \cite{Bae-Lee23}, and the stretching factor of the given example $\phi$ is  
	\[ \frac{3+\sqrt{5}}{2}.\] 
We note that the detailed computation could be found in \cite[Section 11.1]{Bae-Lee23}. 

Finally, we can prove that $\phi$ has positive topological entropy even though the spectral radius of $\phi$ is $1$, since 
	\[ \log \Rad( \phi) =0 <\frac{3+\sqrt{5}}{2} = h_{cat} (\Phi)  \leq h_{top}(\phi).\]
	
We would like to note that the above example $\phi$ is a symplectic automorphism constructed by a construction introduced in \cite{Lee24}. 
We introduce the construction given in \cite{Lee24} and prove that if a symplectic automorphism $\psi$ is compactly supported Hamiltonian isotopic to any symplectic automorphism $\phi$ constructed in \cite{Lee24}, then $\psi$ has positive topological entropy. 

The following is the construction given in \cite{Lee24} with a technical condition. 
\begin{definition}
	\label{def Penner type}
	Let $M$ be a symplectic manifold and $\phi:M \to M$ be a symplectic automorphism. 
	We call that $\phi$ is of {\em Penner type} if there exist two collections of Lagrangian spheres in $M$, 
	\[\{\alpha_1, \dots, \alpha_s\} \text{  and  } \{\beta_1, \dots, \beta_t\}\]
	satisfying the following conditions:
	\begin{enumerate}
		\item[(i)] $\phi$ is a product of positive powers of $\tau_i$ and negative powers of $\sigma_j$ where $\tau_i$ and $\sigma_j$ are generalized Dehn twists along $\alpha_i$ and $\beta_j$ respectively.
		\item[(ii)] $\{\alpha_1, \dots, \alpha_s\}$ and $\{\beta_1, \dots, \beta_t\}$ are pair-wisely disjoint collections of Lagrangian spheres and $\alpha_i$ and $\beta_j$ are transverse to each other.
		\item [(iii)] Let $G$ be a graph such that 
			\begin{itemize}
				\item whose vertex set is $\{\alpha_1, \dots, \alpha_s, \beta_1, \dots, \beta_t\}$ and
				\item two vertices $v, w \in \{\alpha_1, \dots, \alpha_s, \beta_1, \dots, \beta_t\}$ are connected by $k$ edges if the number of intersection points of two Lagrangian spheres is $k$, i.e., $|v \cap w| =k$.
			\end{itemize}
		Then, the graph $G$ is a tree.  
	\end{enumerate}
\end{definition}

\begin{remark}
	\label{rmk Penner type}
	We remark that Definition \ref{def Penner type} is motivated from Penner's construction of pseudo-Anosov surface mapping classes, introduced in \cite{Penner88}. 
	One can easily see that two conditions (i) and (ii) of Definition \ref{def Penner type} are generalizations of the original construction of Penner. 
	The third condition (iii) is a technical condition which we need, in order to employ the result of \cite{Bae-Lee23}. 
	However, we expect that the technical condition could be removed.
	More details will be mentioned in Remark \ref{rmk technical condition}.
\end{remark}

Now, we prove the main result of the present subsection.
\begin{thm}
	\label{thm Penner type}
	Let $M$ be a symplectic manifold of dimension $\geq 6$. 
	If a symplectic automorphism $\phi:M \to M$ is of Penner type, then
	\[h_{top}(\phi) > 0.\]
\end{thm}
\begin{proof}
	We note that any Penner type $\phi$ is compactly supported by Definition \ref{def Penner type}.
	Moreover, the compact support of $\phi$ is given as a small neighborhood of 
	\[\cup \alpha_i \bigcup \cup \beta_j.\]
	Thanks to Weinstein's neighborhood theorem \cite{Weinstein71}, we could assume that the compact support is a subset of a plumbing space $P$ of $T^*S^n$ where $n$ is the dimension of the Lagrangian spheres.
	Moreover, the plumbing pattern of $P$ is determined by the intersection pattern of $\alpha_i$ and $\beta_j$.
	Thus, the plumbing pattern is the graph $G$ defined in \ref{def Penner type} (iii). 
	
	Simply, we have 
	\[\text{compact support of  } \phi \subset P.\]
	Now, we can consider a symplectic automorphism $\phi_0: P \to P$, which is a natural extension of the restriction of $\phi$ on the compact support.
	Then, we have that the topological entropy of $\phi$ and $\phi_0$ coincide. 
	Thus, it is enough to show that $h_{cat}(\Phi_0) >0$. 
	
	Note that the domain of $\phi_0$ is a plumbing space $P$ whose plumbing pattern is a tree. 
	Thus, we can employ the results of \cite{Bae-Lee23}. 
	Finally, \cite[Theorems 1.5, 7.1]{Bae-Lee23} proves that $h_{cat}(\Phi_0)>0$.   
\end{proof}

\begin{remark}
	\label{rmk technical condition}
	We end this subsection with remarks on Theorem \ref{thm Penner type} and Definition \ref{def Penner type} (iii). 
	\begin{enumerate}
		\item We note that the proof of Theorem \ref{thm Penner type} used the results in \cite{Bae-Lee23} without details. 
		We want to simply explain the idea of \cite{Bae-Lee23} before moving on to the next section. 
		The main result of \cite{Bae-Lee23} is to show that if $\phi$ is of Penner type, then $\phi$ induces a functor on Fukaya category having {\em simple asymptotic behavior}. 
		In other words, $\Phi^n$ has a simple, in a categorical sense, behavior as $n \to \infty$.
		It is expected by \cite{Lee24} that shows a simple, in a geometric sense, asymptotic behavior of $\phi$.
		To prove the simple asymptotic behaviors of Penner type symplectic automorphisms, \cite{Lee24, Bae-Lee23} generalized the notion of {\em measured train track} and linear algebras on it in geometric and categorical senses. 
		For more details on measured train tracks, we refer the reader to \cite{Farb-Margalit12}
		\item As mentioned in Remark \ref{rmk Penner type}, Definition \ref{def Penner type} (iii) is a technical condition. 
		We note that thanks to the technical condition, we can guarantee that the plumbing pattern of $P$ in the proof of Theorem \ref{thm Penner type} is a tree. 
		Then, since there exist known generators of the compact Fukaya category of $P$, we can employ the techniques in \cite{Bae-Lee23}. 
		
		However, we expect that one can choose specific generators of compact Fukaya category of any plumbing space. 
		This is an ongoing project of the second-named author, together with Wonbo Jeong and Dogancan Karabas.
		After fixing generators of compact Fukaya category of a general plumbing space, we expect that the result of \cite{Bae-Lee23} could be generalized on a general plumbing space. 
		And, it allows us to drop the technical condition, i.e., Definition \ref{def Penner type} (iii).
	\end{enumerate} 
\end{remark}

\section{The case of compact Fukaya category}
\label{section the case of compact Fukaya category}
As mentioned in the introduction, we prove that a variant of Theorem \ref{thm main} holds for compact Fukaya category under an additional assumption. 
Let $(\hW,d\lambda)$ be a Weinstein manifold and let $W\subset \hW$ be an associated Weinstein domain.
The assumption we consider is a kind of ``{\em duality}'' between compact and wrapped Fukaya categories of $W$. 
We start Section \ref{section the case of compact Fukaya category} by giving a specific example satisfying the ``duality''.

Let $T$ be a tree and let $P_n(T)$ be the plumbing of the cotangent bundles of $T^*S^n$ along $T$ as in \cite{Bae-Choa-Jeong-Karabas-Lee}. For each vertex $v$ of $T$, let $S_v$ be the Lagrangian sphere in $P_n(T)$ corresponding to $v$, and let $L_v$ be the Lagrangian cocore disk corresponding to $v$. This means that the Lagrangian spheres $S_v$ and the Lagrangian cocore disks $L_v$ intersect transversely and that the intersection numbers between those are given by
\[ |S_v \cap L_w|  =\begin{cases} 1 & v= w,\\ 0 &\text{ otherwise.}  \end{cases} \]

We note that \cite{Bae-Choa-Jeong-Karabas-Lee} compares the categorical entropies on compact and wrapped Fukaya categories of $P_n(T)$ by using \cite[Lemma 2.5]{Abouzaid-Smith} and the above Lagrangians $\{S_v\}$ and $\{L_v\}$. 
Motivated by this, we will assume the following in this subsection.

\begin{assumption}
	\label{assumption1}	
 There exists a finite collection of exact, closed Lagrangians $\{S_i\}_{i \in I}$ of $\hW$ indexed by some set $I$ such that
\begin{enumerate}
	\item the direct sum $S = \oplus_{i \in I} S_v$ split-generates the compact Fukaya category $\mathcal{F}(W)$ in such way that every exact, closed Lagrangian $L$ of $\hW$ is quasi-isomorphic to a twisted complex for $L$ with components $\{S_i\}$, in which none of the arrows are nonzero multiples of the identity morphisms, and
	\item there exists another collection of Lagrangians $\{L_i\}_{i \in I}$ of $\hW$, each of which intersects $S_i \in I$ transversely and satisfies
		\[ |S_i \cap L_j|  =\begin{cases} 1 & i= j,\\ 0 &\text{ otherwise.}  \end{cases} \]
\end{enumerate}
\end{assumption}

Let $S = \bigoplus_{i \in I} S_i$ and $L = \bigoplus_{i \in I} L_i$. 
Let us denote by $\Phi_{\mathcal{F}(W)}$ the auto-functor on $\mathcal{F}(W)$ induced by $\phi$.
Then, since the arguments in \cite[Lemma 6.5, Theorem 6.6]{Bae-Choa-Jeong-Karabas-Lee} continue to work under Assumption \ref{assumption1}, we have Lemma \ref{lem compact entropy}. 

\begin{lem}
	\label{lem compact entropy}
	For any exact, compactly-supported symplectic automorphism $\phi$ on a Weinstein manifold $\hW$, if $\hW$ satisfies Assumption \ref{assumption1}, then 
	\[ h_{cat} (\Phi_{\mathcal{F}(W)}) = \lim_{n \to \infty} \frac{1}{n} \log \dim HF^*(\phi^n(S) , L) . \]
\end{lem}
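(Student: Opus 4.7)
The plan is to prove the equality by sandwiching $\dim HF^*(\phi^n(S), L)$ between $\delta_0(S; \phi^n(S))$ and a uniform constant multiple of itself, so that all three quantities share the same exponential growth rate. This follows the strategy of \cite[Lemma 6.5, Theorem 6.6]{Bae-Choa-Jeong-Karabas-Lee}; the point of Assumption \ref{assumption1} is precisely to make that argument run for an arbitrary $W$, rather than the specific plumbings considered there.

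First I would establish the upper bound $\limsup_n \tfrac{1}{n} \log \dim HF^*(\phi^n(S), L) \leq h_{cat}(\Phi_{\mathcal{F}(W)})$ via subadditivity of $HF^*(-, L)$ under mapping cones. Given any iterated-cone decomposition of $\phi^n(S) \oplus E'$ with $k$ summands that are shifts of $S$, the Floer long exact sequences yield
\[ \dim HF^*(\phi^n(S), L) \;\leq\; \dim HF^*(\phi^n(S) \oplus E', L) \;\leq\; k \cdot \dim HF^*(S, L). \]
By Assumption \ref{assumption1}(2) one has $\dim HF^*(S, L) = |I|$, so taking the infimum over decompositions gives $\dim HF^*(\phi^n(S), L) \leq |I| \cdot \delta_0(S; \phi^n(S))$, and taking $\tfrac{1}{n}\log$ as $n \to \infty$ produces the asymptotic inequality.

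For the reverse direction, I would invoke Assumption \ref{assumption1}(1) to pick, for each $n$, a twisted complex presentation of $\phi^n(S)$ whose components are among the $S_i$ and in which no arrow is a nonzero multiple of an identity morphism. Writing $k_n$ for the total number of summands, this presentation directly witnesses $\delta_0(S; \phi^n(S)) \leq k_n$, so it suffices to show $k_n \leq \dim HF^*(\phi^n(S), L)$. Using Assumption \ref{assumption1}(2), the Floer complex $CF^*(\phi^n(S), L)$ carries a filtration coming from the twisted complex whose associated graded is $\bigoplus_\ell CF^*(S_{i_\ell}, L)$, each summand one-dimensional; hence the underlying graded vector space has total dimension $k_n$. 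The induced spectral-sequence differentials are given by $\mu_2$-products with the twisted-complex arrows, and the ``no nonzero multiples of identity'' hypothesis is designed so that these all vanish: non-identity classes in $HF^*(S_i, S_i)$ act as zero on the one-dimensional module $HF^*(S_i, L_i)$, while arrows between distinct $S_i, S_j$ have no one-dimensional target to hit.

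The main obstacle is this last step, the degeneration of the spectral sequence. One must check that the Abouzaid--Smith pairing argument of \cite[Lemma 2.5]{Abouzaid-Smith}, used in the plumbing setting in \cite{Bae-Choa-Jeong-Karabas-Lee}, depends only on the abstract data encoded in Assumption \ref{assumption1} --- in particular on the module-theoretic fact that the unit is the only class in $HF^*(S_i, S_i)$ acting nontrivially on the one-dimensional $HF^*(S_i, L_i)$. Granting this, one obtains the chain of inequalities
\[ \delta_0(S; \phi^n(S)) \;\leq\; k_n \;=\; \dim HF^*(\phi^n(S), L) \;\leq\; |I| \cdot \delta_0(S; \phi^n(S)), \]
and taking $\tfrac{1}{n}\log$ as $n \to \infty$ shows that all three quantities grow at the same exponential rate $h_{cat}(\Phi_{\mathcal{F}(W)})$, yielding the claimed equality.
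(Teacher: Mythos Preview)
Your proposal is correct and follows essentially the same approach as the paper. The paper's own justification is a single sentence asserting that the arguments of \cite[Lemma 6.5, Theorem 6.6]{Bae-Choa-Jeong-Karabas-Lee} continue to work under Assumption~\ref{assumption1}; you have simply unpacked those arguments (including the invocation of \cite[Lemma 2.5]{Abouzaid-Smith}) in detail, arriving at the same sandwich $\delta_0(S;\phi^n(S)) \leq \dim HF^*(\phi^n(S),L) \leq |I|\cdot \delta_0(S;\phi^n(S))$.
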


\begin{thm}
	\label{thm main theorem for compact}
	Let a pair $(\hW,\phi: \hW \to \hW)$	be as in Lemma \ref{lem compact entropy}. Then the categorical entropy $h_{cat}(\Phi_{\mathcal{F}(W)})$ for its induced functor on the compact Fukaya category $\mathcal{F}(W)$ bounds the topological entropy of $\phi$ from below, i.e.,
	\[ h_{cat} (\Phi_{\mathcal{F}(W)}) \leq h_{top} (\phi).\]
\end{thm}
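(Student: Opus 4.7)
The plan is to run exactly the argument of Theorem \ref{thm main}, but with the compact Fukaya category in place of the partially wrapped Fukaya category, and with Lemma \ref{lem compact entropy} in place of \cite[Theorem 2.6]{Dimitrov-Haiden-Katzarkov-Kontsevich}. Let $S = \bigoplus_{i \in I} S_i$ and $L = \bigoplus_{i \in I} L_i$ be as in Assumption \ref{assumption1}. The key simplification is that, because $S$ is closed and $\phi$ is compactly supported, each iterate $\phi^n(S)$ is a compact Lagrangian contained in a fixed compact subset of $W$; consequently the pair $(L, \phi^n(S))$ is automatically good in the sense of Definition \ref{def good pair} for every $n$, and no preliminary wrapping (analogous to the $\varphi_0$ used in the proof of Theorem \ref{thm main}) is needed.

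First, I would fix a compact $W_0 \subset \mathrm{Int}(W)$ that contains both the support of $\phi$ and the intersections $L \cap \phi^n(S)$ for every $n$; this is possible because $\phi^n(S)$ is contained in $S$ union the support of $\phi$. Next, applying Lemma \ref{lemma Lagrangian tomograph} to the pair $(L, \phi^n(S))$ with $L$ in the role of $L_1$ produces a tomograph $\{L^s\}_{s \in B^d_\epsilon}$ of Lagrangians Hamiltonian isotopic to $L$ with $L^s \pitchfork \phi^n(S)$ for almost every $s$. Exactly as in the proof of Theorem \ref{thm main}, the data entering the construction---the functions $\{g_1,\ldots,g_d\}$, the radius $\epsilon$, and the distance constant $\ell$ from Equation \eqref{eqn minimum distance}---can be chosen independently of $n$, since $\phi^{n_1}$ and $\phi^{n_2}$ agree outside $W_0$. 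Hence a single family $\{L^s\}$ works simultaneously for every $n$.

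Setting $N_n(s) := |L^s \cap \phi^n(S)|$, Lemma \ref{lemma Crofton's inequality} gives, for each $n$, the bound $\int_{B^d_\epsilon} N_n(s)\,ds \leq C \cdot \vol(\phi^n(S)\cap W)$ with a constant $C$ independent of $n$. For almost every $s$, invariance of Floer cohomology under compactly supported Hamiltonian isotopy, combined with the standard upper bound by the number of transverse intersection points, yields
\[
\dim HF^*\bigl(\phi^n(S), L\bigr) \;=\; \dim HF^*\bigl(\phi^n(S), L^s\bigr) \;\leq\; N_n(s).
\]
Integrating in $s$ and dividing by $\vol(B^d_\epsilon)$ gives $\dim HF^*(\phi^n(S), L) \leq C' \cdot \vol(\phi^n(S) \cap W)$ for a uniform constant $C'$. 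Taking $\limsup_{n\to\infty} \tfrac{1}{n}\log$, the left-hand side becomes $h_{cat}(\Phi_{\mathcal{F}(W)})$ by Lemma \ref{lem compact entropy}, while the right-hand side is at most $h_{top}(\phi)$ by Proposition \ref{prop Yomdin thm} applied to the smooth compact submanifold $Y = S$ (together with Remark \ref{rmk compactly supported} to reduce to a compact ambient manifold). This yields the desired inequality.

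I do not expect a genuinely new obstacle: the argument is structurally simpler than that of Theorem \ref{thm main}, since compactness of $S$ eliminates the need for a wrapping step and makes the good-pair hypothesis automatic. The main point that requires care is the uniformity of the tomograph family $\{L^s\}$ in $n$, and this is handled exactly as in Theorem \ref{thm main} by placing the support of $\phi$ inside the compact set $W_0$.
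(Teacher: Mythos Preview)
Your proposal is correct and follows the same overall strategy as the paper (Lagrangian tomograph, Crofton's inequality, then Proposition~\ref{prop Yomdin thm}), with one harmless swap: you tomograph the noncompact $L$ and bound the volume growth of $\phi^n(S)$, whereas the paper tomographs the compact $S$ and bounds the volume growth of $\phi^{-n}(L)\cap W$. Both choices go through; your version applies Proposition~\ref{prop Yomdin thm} directly to the compact submanifold $Y=S$ under forward iteration and so avoids the implicit appeal to $h_{top}(\phi^{-1})=h_{top}(\phi)$, while the paper's version makes condition~(A) in the proof of Lemma~\ref{lemma Lagrangian tomograph} vacuous since $S$ is closed.
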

\begin{proof}
Basically, most arguments in the proof of Theorem \ref{thm main} can be applied to this case. Indeed, for $n \in \mathbb{N}$, we once again apply Lemma \ref{lemma Lagrangian tomograph} to the pair $( S, \phi^{-n}(L) )$ to get a family of Lagrangians $\{S^s\}_{s \in B_{\epsilon}^d}$ such that
\begin{enumerate}
	\item[(i)] $S$ and $S^s$ are Hamiltonian isotopic to each other for all $s \in B_\epsilon^d$, 
	\item[(ii)] $d_H(S, S^s) < \frac{\epsilon}{2}$, and
	\item[(iii)] $S^s \pitchfork \phi^{-n}(L)$ for almost all $s \in B_\epsilon^d$. 
\end{enumerate}
As mentioned in the proof of \ref{thm main}, one can find such a family $\{S^s\}_{s\in B_{\epsilon}^d}$, for which the third condition (iii) holds for all $n\in \mathbb{N}$.

Then we consider the following function.
\[N_n(s) := |S^s \cap \phi^{-n}(L)|.\]

By applying Lemma \ref{lemma Crofton's inequality} once again, we have 
\begin{gather}
	\label{eqn cpt proof 1}
	\int_{B^d_\epsilon} N_n(s) ds \leq C' \cdot \vol\left(\phi^{-n}(L) \cap W\right).
\end{gather}
for some constant $C'$ which does not depend on $n$.

On the other hand, for $S^s \pitchfork \phi^{-n}(L)$, we have 
\begin{gather}
	\label{eqn cpt proof 2}
	\mathrm{dim}HF \left(\phi^n(S),L\right) = \mathrm{dim}HF\left(S,\phi^{-n}(L)\right) =\mathrm{dim}HF\left(S^s, \phi^{-n}(L) \right) \leq N_n(s).
\end{gather}
The above inequality holds since $S$ and $S^s$ are Hamiltonian isotopic.

By integrating Equation \eqref{eqn cpt proof 2}, one has 
\begin{gather}
	\label{eqn cpt proof 3}
	\vol(B^d_\epsilon) \cdot \mathrm{dim}HF \left(\phi^n(S),L\right) = \int_{B^d_\epsilon} \mathrm{dim}HF \left(\phi^n(S),L\right) ds \leq \int_{B^d_\epsilon} N_n(s) ds.
\end{gather}

From two inequalities \eqref{eqn cpt proof 1} and \eqref{eqn cpt proof 3}, one has 
\begin{gather}
	\label{eqn cpt proof 4}
	\vol(B^d_\epsilon) \cdot \mathrm{dim}HF \left(\phi^n(S),L\right) \leq C' \cdot \vol\left(\phi^n(G) \cap W\right).
\end{gather}
By taking $\limsup_{n \to \infty} \frac{1}{n} \log$ for the both hand sides of \eqref{eqn cpt proof 4} and using Lemma \ref{lem compact entropy}, one has 
\begin{align}
	\label{eqn cpt proof 5}
	h_{cat} (\Phi_{\mathcal{F}(W)}) &= \limsup_{n\to\infty} \frac{1}{n} \log \dim HF \left(\phi^n(S),L\right) \\
	\notag &\leq \limsup_{n \to \infty} \frac{1}{n} \log \vol\left(\phi^{-n}(L) \cap W\right).
\end{align}

Here the latter is again bounded above by $h_{top}(\phi)$ due to Theorem \ref{thm Yomdin thm}. Therefore, \eqref{eqn cpt proof 5} proves the assertion.
\end{proof}

\section{Barcode entropy}
\label{section barcode entropy}
In this section, we define another entropy, called {\em barcode entropy}. 
As mentioned in Section \ref{subsection results}, the notion of barcode entropy is the same as the {\em relative barcode entropy} defined in \cite{Cineli-Erman-Ginzburg}. 
At the end of Section \ref{section barcode entropy}, we give further questions related to categorical, topological, and barcode entropies. 

\subsection{Preliminaries}
\label{subsection preliminaries}
In this subsection, we review the theory of persistence modules, and we apply it to Lagrangian Floer homology. 
We refer the reader to \cite{Polterovich-Rosen-Samvelyan-Zhang,Usher-Zhang} for the theory of persistence modules. 
Also, we refer the reader to \cite{Cineli-Erman-Ginzburg} for the details we omitted in the current subsection.

The notion of {\em non-Archimedean norm} on a vector space is defined in \cite[Definition 2.2]{Usher-Zhang}. 
It is easy to check that $\mathcal{A}$ in \eqref{eqn non-archimedean norm} is a {\em non-Archimedean norm} on the $k$-vector space $CF(L_1,L_2)$.
Moreover, $CF(L_1,L_2)$ is {\em orthogonal} with respect to $\mathcal{A}$. 

Now, we are ready to apply \cite[Theorem 3.4]{Usher-Zhang} for the differential
\[\delta: CF(L_1,L_2) \to CF(L_1,L_2),\]
Since $\delta$ is a linear self-mapping of an orthogonal vector space $CF(L_1,L_2)$, one obtains a basis $\Sigma = \{\alpha_i, \beta_j, \gamma_j\}$ of $CF(L_1,L_2)$ satisfying 
\begin{enumerate}
	\item $\partial \alpha_i = 0$, 
	\item $\partial \gamma_j = \beta_j$, and
	\item $\mathcal{A}(\gamma_1) - \mathcal{A}(\beta_1) \leq \mathcal{A}(\gamma_2) - \mathcal{A}(\beta_2) \leq \dots$.
\end{enumerate}

By using the above, we define the followings.
\begin{definition}
	\label{def barcode}
	\mbox{}
	\begin{enumerate}
		\item A {\em bar} of $CF(L_1,L_2)$ is either $\alpha_i$ or a pair $(\beta_j, \gamma_j)$. 
		\item The {\em length of a bar $b$} is given by 
		\begin{equation*}
			\text{the length of  } b = 
			\begin{cases}
				\infty & \text{if  } b = \alpha_i, \\
				\mathcal{A}(\gamma_j) - \mathcal{A}(\beta_j) & \text{otherwise}.
			\end{cases} 
		\end{equation*}
		\item Let $b_\epsilon(L_1,L_2)$ be the number of bars of $CF(L_1,L_2)$ whose lengths are greater than or equal to $\epsilon$. 
	\end{enumerate}
\end{definition}

\begin{remark}
	\label{rmk length of bars}
	We note that $\mathcal{A}$ is unique up to constant.
	More precisely, for an exact Lagrangian $L_i$, a choice of primitive function $h_i: L_i \to \mathbb{R}$ is not unique, but unique up to constant.  
	Thus, it is easy to show that the length of bars depends only on $L_i$ and independent of the choice of primitive function $h_i: L_i \to \mathbb{R}$. 
\end{remark}

By Definition \ref{def barcode}, Lemma \ref{lemma inequality 1} is obvious. 
\begin{lem}
	\label{lemma inequality 1}
	Let $L_1$ and $L_2$ be a transversal pair of Lagrangians. Then, for any $\epsilon \geq 0$,
	\[b_\epsilon(L_1,L_2) \leq b_0(L_1, L_2) \leq |L_1 \cap L_2|.\]
\end{lem}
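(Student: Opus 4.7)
The statement says it is obvious, and indeed both inequalities are essentially bookkeeping about the normal-form basis $\Sigma = \{\alpha_i, \beta_j, \gamma_j\}$ produced by applying \cite[Theorem 3.4]{Usher-Zhang} to the differential on $CF(L_1,L_2)$. So my plan is to argue each inequality in one short step.

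For the first inequality, my plan is to observe that the length of every bar is non-negative (with value $\infty$ for the $\alpha_i$-type bars and $\mathcal{A}(\gamma_j)-\mathcal{A}(\beta_j)\geq 0$ for the pair bars, the latter non-negativity being built into the Usher--Zhang singular value decomposition, since $\gamma_j$ is a primitive of $\beta_j$ under a filtered differential). Hence for any $\epsilon \geq 0$, the set of bars of length $\geq \epsilon$ sits inside the set of bars of length $\geq 0$, which is all bars. This gives $b_\epsilon(L_1,L_2)\leq b_0(L_1,L_2)$.

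For the second inequality, my plan is to count basis elements. Writing $a := \#\{\alpha_i\}$ and $p := \#\{(\beta_j,\gamma_j)\}$, by Definition \ref{def barcode} (1) the number of bars is exactly
\[
b_0(L_1,L_2) = a + p.
\]
On the other hand, $\Sigma$ is a basis of the $k$-vector space $CF(L_1,L_2)$, whose total cardinality is $a + 2p$. Because $L_1$ and $L_2$ are assumed transversal, $CF(L_1,L_2)$ has $|L_1\cap L_2|$ as its $k$-dimension, so
\[
a + p \;\leq\; a + 2p \;=\; \dim_k CF(L_1,L_2) \;=\; |L_1\cap L_2|,
\]
which is the desired bound.

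There is no real obstacle; the only point to be a little careful about is justifying that the pair-bar lengths are non-negative so that $\epsilon \geq 0$ actually selects a sub-multiset, but this is exactly the content of the normal-form theorem being invoked (which produces an orthogonal basis with ordered, non-negative bar lengths) together with the fact noted in Remark \ref{rmk length of bars} that the length is independent of the choice of primitives $h_i$. Both inequalities then follow by a one-line count, and the proof can be written in a few lines.
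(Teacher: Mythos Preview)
Your proposal is correct and matches the paper's approach: the paper simply declares the lemma obvious from Definition~\ref{def barcode}, and you have spelled out exactly the bookkeeping that makes it so. One minor simplification: the first inequality $b_\epsilon \leq b_0$ follows already from the set inclusion $\{\text{length} \geq \epsilon\} \subseteq \{\text{length} \geq 0\}$ for $\epsilon \geq 0$, without needing to verify that all bar lengths are non-negative (though that fact is true and does no harm to mention).
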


It is well-known that $b_\epsilon$ is insensitive to small perturbations of the Lagrangians with respect to the Hofer distance. 
More precisely, Lemma \ref{lemma inequality 2} holds.
\begin{lem}
	\label{lemma inequality 2}
	Let $L_1'$ be a Lagrangian satisfying 
	\begin{itemize}
		\item $L_1'$ and $L_1$ are Hamiltonian isotopic to each other, 
		\item $d_H(L_1, L_1') < \frac{\delta}{2}$ with $\delta < \epsilon$, and
		\item $L_1'$ and $L_2$ are transversal to each other. 
	\end{itemize}
	Then, 
	\[b_{\epsilon+\delta}(L_1',L_2) \leq b_\epsilon(L_1,L_2) \leq b_{\epsilon - \delta}(L_1',L_2).\]
\end{lem}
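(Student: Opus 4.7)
The plan is to prove this by the standard Floer-theoretic stability argument: a small Hofer-norm Hamiltonian isotopy produces continuation maps whose filtration shift is controlled by the Hofer norm, which gives an interleaving of the filtered Floer complexes, and hence a bottleneck-distance bound between the associated barcodes via the Usher--Zhang isometry theorem.

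More concretely, by the definition of $d_H$ one can pick a compactly supported Hamiltonian isotopy $\varphi$ with $\varphi(L_1)=L_1'$ and $\lVert \varphi \rVert_{Hofer} < \delta/2$. Such $\varphi$ and its inverse induce continuation maps
\[ c : CF(L_1,L_2) \to CF(L_1',L_2), \qquad c' : CF(L_1',L_2) \to CF(L_1,L_2), \]
defined by counting solutions of a Floer equation whose moving boundary condition interpolates between $L_1$ and $L_1'$. The key analytic input, which is the classical Floer--Hofer energy estimate, is that both $c$ and $c'$ shift the action filtration $\mathcal{A}$ by less than $\delta/2$, and the chain homotopies exhibiting $c'\circ c\simeq \mathrm{id}$ and $c\circ c'\simeq \mathrm{id}$ shift filtration by less than $\delta$. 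In the language of \cite{Usher-Zhang}, the two orthogonal chain complexes $(CF(L_1,L_2),\mathcal{A})$ and $(CF(L_1',L_2),\mathcal{A})$ are $\delta/2$-interleaved.

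By the Usher--Zhang isometry theorem, this $\delta/2$-interleaving implies that there is a matching of the two barcodes in which every matched pair of bars has endpoints differing by at most $\delta/2$, while every unmatched bar has length strictly less than $\delta$. Consequently, each bar of $CF(L_1,L_2)$ of length $\geq \epsilon$ (which is $>\delta$ by hypothesis) is matched to a bar of $CF(L_1',L_2)$ of length at least $\epsilon - 2\cdot \delta/2 = \epsilon - \delta$, yielding
\[ b_\epsilon(L_1,L_2) \leq b_{\epsilon-\delta}(L_1',L_2). \]
The inequality $b_{\epsilon+\delta}(L_1',L_2) \leq b_\epsilon(L_1,L_2)$ follows by the symmetric argument, applied with the roles of $(L_1,L_2)$ and $(L_1',L_2)$ exchanged and with the threshold shifted from $\epsilon$ to $\epsilon+\delta$.

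The only delicate point is the filtration estimate for the continuation maps; this is, however, a classical Floer-theoretic computation and presents no genuine obstacle in the exact, cylindrical-at-infinity setting of Section~2.3 (a maximum principle at infinity keeps the Floer strips inside a compact region, while inside that region the standard energy identity bounds the action shift by $\lVert \varphi \rVert_{Hofer}$). Once this estimate is in place, the translation to bar counts is purely formal.
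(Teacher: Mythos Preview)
Your argument is correct and is exactly the standard stability-of-barcodes proof that the paper defers to: the paper's own ``proof'' of this lemma is simply a citation to \cite[Equations (3.13) and (3.14)]{Cineli-Erman-Ginzburg} together with \cite{Kislev-Shelukhin, Polterovich-Rosen-Samvelyan-Zhang, Usher-Zhang}, and what you wrote is a faithful unpacking of that argument (Hofer-controlled continuation maps give a $\tfrac{\delta}{2}$-interleaving, hence a bottleneck bound via Usher--Zhang, hence the two inequalities on bar counts).
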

\begin{proof}
	See \cite[Equations (3,13) and (3,14)]{Cineli-Erman-Ginzburg}. 
	We also refer the reader to \cite{Kislev-Shelukhin, Polterovich-Rosen-Samvelyan-Zhang, Usher-Zhang}. 
\end{proof}

Now, we extend the barcode counting function $b_\epsilon(L_1,L_2)$ to a {\em good pair $(L_1,L_2)$} defined in Definition \ref{def good pair}.
For a good pair, we set 
\[b_\epsilon(L_1, L_2) := \liminf_{d_H(L_2, L_2') \to 0} b_\epsilon(L_1, L_2'),\]
where the limit is taken over Lagrangians $L_2'$ such that $L_2' \pitchfork L_1$.
We also note that $L_2$ and $L_2'$ should be Hamiltonian isotopic so that the Hofer distance between them is defined. 

\subsection{Barcode entropy}
\label{subsection barcode entropy}
In the rest of this paper, we consider the same situation as what we considered in Section \ref{section categorical vs topological entropy}. 
For the reader's convenience, we review the setting.

Let $(\hW, d\lambda)$ be a Weinstein manifold and let $\phi: \hW \to \hW$ be a compactly supported exact symplectic automorphism. 
Then, there is a Weinstein domain $W$ such that 
\begin{itemize}
	\item $\hW = W \cup \left( [1,\infty) \times \partial W\right)$, 
	\item $\lambda|_{[1,\infty \times \partial W)} = r \alpha$ where $r$ is a coordinate for $[1.\infty)$ and $\alpha := \lambda|_{\partial W}$, and 
	\item the support of $\phi$ is contained in $\mathrm{Int}(W)$. 
\end{itemize}

Let $(L_1,L_2)$ be a good pair of Lagrangians with respect to $W$, i.e., 
\[L_1 \cap L_2 \cap \left([1,\infty) \times \partial W \right) = \varnothing.\]
Since $\phi$ is the identity outside of $W$, $\left(L_1,\phi^n(L_2)\right)$ is also a good pair for any $n \in \mathbb{Z}$.
Then, for a fixed $\epsilon$, $b_\epsilon\left(L_1, \phi^n(L_2)\right)$ is well-defined.  
We would like to define the {\em barcode entropy of $\phi$} as the exponential growth rate of $b_\epsilon\left(L_1, \phi^n(L_2)\right)$ as $n \to \infty$.

To be more precise, let $\log^+: \mathbb{Z}_{\geq 0} \to \mathbb{R}$ be the function defined as 
\[\log^+(k) = \begin{cases}
	0 \text{  if  } k =0, \\
	\log(k)  \text{  other wise,  }
\end{cases}\]
where the logarithm is taken base $2$.

\begin{definition}
	\label{def barcode entropy}
	\mbox{}
	\begin{enumerate}
		\item For any $\epsilon \in \mathbb{R}_{\geq 0}$, the {\em $\epsilon$-barcode entropy of $\phi$ relative to $(L_1,L_2)$} is 
		\[h_\epsilon(\phi;L_1,L_2) := \lim_{n \to \infty} \frac{1}{n} \log^+ b_\epsilon\left(L_1, \phi^n(L_2)\right).\]
		\item The {\em barcode entropy of $\phi$ relative to $(L_1,L_2)$} is 
		\[h_{bar}(\phi;L_1,L_2) := \lim_{\epsilon \searrow 0} h_\epsilon(\phi;L_1,L_2).\]
	\end{enumerate}
\end{definition}
We note that Definition \ref{def barcode entropy} is the same as the notion of {\em relative barcode entropy} in \cite{Cineli-Erman-Ginzburg}, except a minor adjustment to our set up. 

\subsection{Barcode vs topological entropy}
\label{subsection barcode vs topological entropy}
In this subsection, we prove that for any good pair $(L_1, L_2)$, the barcode entropy of $\phi$ bounds the topological entropy of $\phi$ from below. 
The proof of Proposition \ref{prop bar vs top} is almost same as the \cite[Proof of Theorem A]{Cineli-Erman-Ginzburg}.

\begin{prop}[= The second inequality in Proposition \ref{prop barcode}]
	\label{prop bar vs top}
	For any good pair $(L_1,L_2)$, 
	\[h_{bar}(\phi;L_1,L_2) \leq h_{top}(\phi).\]
\end{prop}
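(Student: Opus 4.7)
The plan is to mimic the proof of Theorem \ref{thm main}, with $\dim HW_\Lambda\bigl(G, \phi^n(G)\bigr)$ replaced by $b_\epsilon\bigl(L_1, \phi^n(L_2)\bigr)$, and to let $\epsilon \searrow 0$ at the end. Fix $\epsilon > 0$ and pick auxiliary constants $0 < \epsilon_0 < \delta < \epsilon$. Since $\phi$ is compactly supported, one may choose a compact $W_0 \subset \mathrm{Int}(W)$ containing both $L_1 \cap L_2$ and $\mathrm{supp}(\phi)$; then $\bigl(L_1, \phi^n(L_2)\bigr)$ is a good pair for every $n$, with intersection set inside $W_0$. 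Apply Lemma \ref{lemma Lagrangian tomograph} to $(L_1, L_2)$ with tomograph radius $\epsilon_0$ to obtain a family $\{L_1^s\}_{s \in B^d_{\epsilon_0}}$ with $L_1^s$ Hamiltonian isotopic to $L_1$ and $d_H(L_1, L_1^s) < \epsilon_0/2$. As in the proof of Theorem \ref{thm main}, the parameters governing the construction (namely $\epsilon_0$, the cut-off functions $g_i$, and the distance $\ell$ from \eqref{eqn minimum distance}) can be chosen independently of $n$ because $\phi$ fixes $L_2$ away from $W_0$, so the same family satisfies $L_1^s \pitchfork \phi^n(L_2)$ for almost every $s$ and every $n \in \mathbb{N}$.

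Set $N_n(s) := |L_1^s \cap \phi^n(L_2)|$. Applying Lemma \ref{lemma Crofton's inequality} to the good pair $\bigl(L_1, \phi^n(L_2)\bigr)$ produces a constant $C$ independent of $n$ with
\[\int_{B^d_{\epsilon_0}} N_n(s)\, ds \leq C \cdot \vol\!\bigl(\phi^n(L_2) \cap W\bigr).\]
The Hofer bound $d_H(L_1, L_1^s) < \epsilon_0/2 < \delta/2$ together with Lemma \ref{lemma inequality 2} and Lemma \ref{lemma inequality 1} gives, for almost every $s$,
\[b_\epsilon\!\bigl(L_1, \phi^n(L_2)\bigr) \leq b_{\epsilon-\delta}\!\bigl(L_1^s, \phi^n(L_2)\bigr) \leq |L_1^s \cap \phi^n(L_2)| = N_n(s).\]
Integrating and combining the two displays yields
\[\vol(B^d_{\epsilon_0}) \cdot b_\epsilon\!\bigl(L_1, \phi^n(L_2)\bigr) \leq C \cdot \vol\!\bigl(\phi^n(L_2) \cap W\bigr).\]
Taking $\limsup_{n \to \infty} \frac{1}{n} \log^+$ on both sides and invoking Proposition \ref{prop Yomdin thm} with $Y = L_2 \cap W$, exactly as at the end of the proof of Theorem \ref{thm main}, gives $h_\epsilon(\phi; L_1, L_2) \leq h_{top}(\phi)$. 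Finally, sending $\epsilon \searrow 0$ yields $h_{bar}(\phi; L_1, L_2) \leq h_{top}(\phi)$.

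The main technical point is the first inequality in the barcode chain: $L_1$ is generally not transversal to $\phi^n(L_2)$, so the left-hand side is defined only via the $\liminf$ extension from Section \ref{subsection preliminaries}, whereas Lemma \ref{lemma inequality 2} presupposes a transversal pair and perturbs the first slot. One therefore has to approximate $\phi^n(L_2)$ by Hofer-close transversal Lagrangians $L_2' \pitchfork L_1$ (and simultaneously $L_2' \pitchfork L_1^s$, which is generic), apply Lemma \ref{lemma inequality 2} to the pairs $(L_1, L_2')$ and $(L_1^s, L_2')$, and then pass to the $\liminf$ using monotonicity under pointwise inequalities. This swap-of-slots argument is precisely the point at which the proof follows \cite{Cineli-Erman-Ginzburg} most closely, and it is the only place where the argument requires care beyond the template already established in the proof of Theorem \ref{thm main}.
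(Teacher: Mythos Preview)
Your proof is correct and follows the same strategy as the paper: build a Lagrangian tomograph of $L_1$ that works for all $n$, use Hofer stability of barcodes (Lemma~\ref{lemma inequality 2}) together with Lemma~\ref{lemma inequality 1} to bound $b_\epsilon$ by the intersection count $N_n(s)$, integrate, apply Crofton, and finish with Proposition~\ref{prop Yomdin thm}. The paper organizes the endgame slightly differently---it fixes $\alpha\le h_{bar}$ and a subsequence $n_i$ with $b_{2\epsilon}\bigl(L_1,\phi^{n_i}(L_2)\bigr)>2^{(\alpha-\delta)n_i}$, then shows $\alpha-\delta\le h_{top}(\phi)$---whereas you prove $h_\epsilon(\phi;L_1,L_2)\le h_{top}(\phi)$ directly for each $\epsilon$ and pass to the limit; the two are equivalent and your formulation is arguably more streamlined. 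Your final paragraph on the transversality issue (that $b_\epsilon(L_1,\phi^n(L_2))$ is only defined via the $\liminf$ extension and one must approximate before invoking Lemma~\ref{lemma inequality 2}) is a point the paper glosses over, so you are being more careful there rather than less.
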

\begin{proof}
	If $h_{bar}(\phi;L_1,L_2) =0$, then there is nothing to prove.
	Thus, let assume that $h_{bar}(\phi;L_1,L_2) >0$. 
	We would like to show that if $\alpha\leq h_{bar}(\phi;L_1,L_2)$, then $\alpha \leq h_{top}(\phi)$. 
	
	Let $\delta$ be a positive number.
	Since 
	\[h_{bar}(\phi;L_1,L_2) := \lim_{\epsilon \searrow 0} h_\epsilon(\phi;L_1,L_2) \geq \alpha,\]
	there is $\epsilon_0>0$ such that if $\epsilon < \epsilon_0$, then $h_\epsilon(\phi;L_1,L_2) > \alpha -\delta$.
	We fix a positive number $\epsilon$ such that $2\epsilon < \epsilon_0$. 
	
	Since 
	\[h_{2 \epsilon}(\phi;L_1,L_2) = \limsup_{n \to \infty}\frac{1}{n} \log^+ b_{2 \epsilon}\left(L_1,\phi^n(L_2)\right) > \alpha - \delta,\]
	there is an increasing sequence of natural numbers $\{n_i\}_{i \in \mathbb{N}}$ such that 
	\begin{gather}
		\label{eqn inequality 3}
		b_{2\epsilon}\left(L_1,\phi^{n_i}(L_2)\right) > 2^{(\alpha-\delta)n_i}.
	\end{gather}
	
	Now, we apply Lemma \ref{lemma Lagrangian tomograph} to the good pair $\left(L_1, \phi^{n_i}(L_2)\right)$.
	Then, one obtains a family of Lagrangians $\{L^s\}_{s \in B^d_{\epsilon, n_i}}$ such that 
	\begin{enumerate}
		\item[(i)] $L_1$ and $L^s$ are Hamiltonian isotopic for all $s \in B^d_{\epsilon, n_i}$, 
		\item[(ii)] $d_H(L_1,L^s) < \frac{\epsilon}{2}$ for all $s \in B^d_{\epsilon, n_i}$, and 
		\item[(iii)] $L^s \pitchfork \phi^{n_i}(L_2)$ for almost all $s \in B^d_{\epsilon, n_i}$.
	\end{enumerate}

	We point out that by the argument in the proof of Theorem \ref{thm main}, we can choose a family satisfying (i)--(iii) for all $n_i$. 
	Let $\{L^s\}_{s \in B^d_\epsilon}$ denote a fixed Lagrangian tomograph. 
%
%
%
	
	Let 
	\[N_i(s):= |L^s \cap \phi^{n_i}(L_2)|.\]
	Then, one has 
	\[\int_{B^d_\epsilon} N_i(s) ds \leq C \cdot \vol(\phi^{n_i}(L_2) \cap W),\]
	by applying Lemma \ref{lemma Crofton's inequality}.
	We note that $C$ is a constant independent from $n_i$. 
	
	From Lemmas \ref{lemma inequality 1} and \ref{lemma inequality 2} and Equation \eqref{eqn inequality 3}, we have 
	\[2^{(\alpha-\delta)n_i} \leq b_{2\epsilon}\left(L_1,\phi^{n_i}(L_2)\right) \leq b_\epsilon\left(L^s,\phi^{n_i}(L_2)\right) \leq |L^s \cap \phi^{n_i}(L_2)| = N_i(s).\]
	Then, by taking integration over $B^d_{\epsilon}$, one has 
	\[\vol(B^d_\epsilon) \cdot 2^{(\alpha-\delta)n_i} \leq C \cdot \vol\left(\phi^{n_i}(L_2) \cap W\right).\]
	Since $\vol(B^d_\epsilon)$ and $C$ do not depend on $n_i$, 
	\begin{gather}
		\label{eqn final inequality}
		\alpha - \delta \leq \limsup_{i\to \infty} \frac{1}{n_i} \log^+ \vol\left(\phi^{n_i}(L_2) \cap W \right) \leq h_{top}(\phi|_W).
	\end{gather}
	The last inequality holds due to Theorem \ref{thm Yomdin thm}, and because of the fact that 
	\[\phi^{n_i}(L_2) \cap W = \phi^{n_i}(L_2 \cap W) = (\phi|_W)^{n_i}(L_2 \cap W).\]
	
	Finally, we note that $\phi$ is compactly supported, and that $\mathrm{supp}(\phi) \subset W$. 
	Thus, 
	\[h_{top}(\phi) = h_{top}(\phi|_W).\]

	Thus, one has 
	\[\alpha - \delta \leq h_{top}(\phi).\] 
	This completes the proof.
\end{proof}

\subsection{Barcode vs categorical entropy}
\label{subsection barcode vs categorical entropy}
In the previous section, for an arbitrary good pair $(L_1,L_2)$, we compared the barcode entropy of a triple $(\phi;L_1,L_2)$ and the topological entropy of $\phi$.
As the result, we proved Proposition \ref{prop bar vs top}.
In this subsection, we compare barcode and categorical entropy.
However, in order to compare them, we should choose some specific pairs of Lagrangians. 

First, we choose a stop $\Lambda$ giving a fully stopped partially wrapped Fukaya category $\cW(W, \Lambda)$.
Let $G$ be an embedded Lagrangian generating $\cW(W,\Lambda)$. 
As we did in the proof of Theorem \ref{thm main}, let $\varphi_0$ be a Hamiltonian isotopy satisfying
\begin{enumerate}
	\item[(A)] $\left(\varphi_0(G),\phi^n(G)\right)$ is a good pair for all $n \in \mathbb{N}$, 
	\item[(B)] $HW_\Lambda\left(G,\phi^n(G)\right) = HF\left(\varphi_0(G),\phi^n(G)\right)$ for all $n \in \mathbb{N}$.
\end{enumerate}

As we showed before, one has 
\[h_{cat}(\Phi) = h_{cat}(\Phi_\Lambda) = \limsup_{n\to\infty} \frac{1}{n} \log^+ \dim HF\left(\varphi_0(G),\phi^n(G)\right).\]
Since $\dim HF\left(\varphi_0(G),\phi^n(G)\right)$ equals the number of bars having infinite length, one has 
\[\dim HF\left(\varphi_0(G),\phi^n(G)\right) \leq b_\epsilon\left(\varphi_0(G),\phi^n(G)\right).\]
This induces Proposition \ref{prop bar vs cat}.

\begin{prop}[= The first inequality in Proposition \ref{prop barcode}]
	\label{prop bar vs cat}
	For $G$ and $\varphi_0$ given above, 
	\[h_{cat}(\Phi) \leq h_{bar}\left(\phi; \varphi_0(G), G\right).\]
\end{prop}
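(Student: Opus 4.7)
The plan is to chain together three facts that have already been set up in the paper: the Dimitrov--Haiden--Katzarkov--Kontsevich formula for $h_{cat}(\Phi_\Lambda)$, the identification of $\dim HF$ with the count of infinite-length bars in the normal form, and the definition of $b_\epsilon$ on good (non-transversal) pairs via Hofer-small transversal perturbations. Each link is essentially tautological; there is no genuine obstacle.

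First, combine Lemma \ref{lemma wrapped vs partially wrapped} with \cite[Theorem 2.6]{Dimitrov-Haiden-Katzarkov-Kontsevich}, applied to the smooth and proper category $\cW(\hW,\Lambda)$ with generator $G$. Using property (B) of $\varphi_0$ to replace the partially wrapped morphism space by ordinary Floer cohomology, this gives
\[
h_{cat}(\Phi) \;=\; h_{cat}(\Phi_\Lambda) \;=\; \limsup_{n\to\infty}\frac{1}{n}\log^+\dim HF\bigl(\varphi_0(G),\phi^n(G)\bigr).
\]
Property (A) ensures that $\bigl(\varphi_0(G),\phi^n(G)\bigr)$ is a good pair for every $n$, so the barcode quantity $b_\epsilon\bigl(\varphi_0(G),\phi^n(G)\bigr)$ in Definition \ref{def barcode entropy} is defined.

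Second, pick any transversal Hamiltonian perturbation $L_2'$ of $\phi^n(G)$. The normal form basis $\{\alpha_i,\beta_j,\gamma_j\}$ of $CF\bigl(\varphi_0(G),L_2'\bigr)$ described in Section \ref{subsection preliminaries} makes the $\alpha_i$'s a basis of $HF\bigl(\varphi_0(G),L_2'\bigr)$, so the number of infinite-length bars equals $\dim HF\bigl(\varphi_0(G),L_2'\bigr)$. Every infinite-length bar has length $\geq \epsilon$ for any $\epsilon \geq 0$, hence
\[
\dim HF\bigl(\varphi_0(G),L_2'\bigr) \;\leq\; b_\epsilon\bigl(\varphi_0(G),L_2'\bigr).
\]
Because $L_2'$ is Hamiltonian isotopic to $\phi^n(G)$ by construction, $\dim HF$ is unchanged: $\dim HF\bigl(\varphi_0(G),L_2'\bigr) = \dim HF\bigl(\varphi_0(G),\phi^n(G)\bigr)$. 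Taking the $\liminf$ as $d_H\bigl(L_2',\phi^n(G)\bigr)\to 0$ over transversal perturbations then yields, directly from the extension of $b_\epsilon$ to good pairs recalled in Section \ref{subsection preliminaries},
\[
\dim HF\bigl(\varphi_0(G),\phi^n(G)\bigr) \;\leq\; b_\epsilon\bigl(\varphi_0(G),\phi^n(G)\bigr).
\]

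Third, apply $\limsup_{n\to\infty}\tfrac{1}{n}\log^+$ to both sides. The left-hand side is $h_{cat}(\Phi)$ by the first step, while the right-hand side is $h_\epsilon\bigl(\phi;\varphi_0(G),G\bigr)$ by Definition \ref{def barcode entropy}(1), since $\phi^n(G) = \phi^n\bigl(L_2\bigr)$ with $L_2 := G$. Letting $\epsilon \searrow 0$ and invoking Definition \ref{def barcode entropy}(2) then gives
\[
h_{cat}(\Phi) \;\leq\; h_{bar}\bigl(\phi;\varphi_0(G),G\bigr),
\]
which is the proposition. The only point that required any care is checking that the $\liminf$ in the extension of $b_\epsilon$ to good pairs preserves the inequality, and this is automatic because the bound $\dim HF \leq b_\epsilon$ holds uniformly over all transversal perturbations with a Hamiltonian-invariant left-hand side.
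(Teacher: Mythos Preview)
Your proof is correct and follows essentially the same approach as the paper: identify $h_{cat}(\Phi)$ with the exponential growth rate of $\dim HF\bigl(\varphi_0(G),\phi^n(G)\bigr)$ via Lemma \ref{lemma wrapped vs partially wrapped} and \cite[Theorem 2.6]{Dimitrov-Haiden-Katzarkov-Kontsevich}, then bound $\dim HF$ by $b_\epsilon$ using that the infinite-length bars compute $HF$. You are actually more careful than the paper in one respect: you explicitly handle the passage from transversal perturbations to the good pair via the $\liminf$ definition of $b_\epsilon$, which the paper leaves implicit.
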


\begin{remark}
	\label{rmk zero barcode}
	We note that there always exists a good pair $(L_1,L_2)$ such that $h_{bar}(\phi;L_1,L_2)=0$. 
	By choosing a Lagrangian $L_2$ such that $L_2$ does not intersect the support of $\phi$, one obtains a such pair. 
	Thus, the choice of $G$ (and $\varphi_0$) in Proposition \ref{prop bar vs cat} is essential.
\end{remark}

\subsection{Further questions} 
\label{subsection further questions 2}
In this subsection, we discuss the questions given in Section \ref{subsection further questions} in more detail. 

We also recall that 
\begin{gather*}
	h_{top}(\phi) \geq \text{  the exponential growth rate of  } \vol\left(\phi^n(Y)\right),
\end{gather*}
for any compact submanifold $Y$ by \cite{Newhouse, Przytycki}.
And it is known by \cite{Yomdin} that
\begin{gather}
	\label{eqn topological entropy 2}
	h_{top}(\phi) = \sup_{\text{compact submanifold  } Y \subset W} \left(\text{  the exponential growth rate of  } \vol\left(\phi^n(Y)\right)\right).
\end{gather}

As one can see in the proof of Proposition \ref{prop bar vs top}, $h_{bar}(\phi;L_1, L_2)$ bounds the exponential volume growth rate of $\phi^n(L_2)$ from below.
Thus,  
\[h_{top}(\phi) \geq h_{bar}(\phi;L_1, L_2).\]
As a generalization of Equation \eqref{eqn topological entropy 2}, one can ask whether the following equality holds or not:
\[h_{top}(\phi) = \sup_{(L_1, L_2) \text{  is a good pair}} h_{bar}(\phi;L_1,L_2).\]

The supremum in the above equation runs over the set of all good pairs. 
As mentioned in Remark \ref{rmk zero barcode}, it is easy to find a good pair $(L_1,L_2)$ such that $h_{bar}(\phi;L_1,L_2)=0$. 
Thus, we would like to remove such good pairs from the set where the supremum runs over, for computational convenience.

Finally, we ask whether the following equality holds or not:
\[h_{top}(\phi) = \sup_{G, \varphi_0} h_{bar}\left(\phi;\varphi_0(G),G)\right),\]
where $G$ is a generating Lagrangian and $\varphi_0$ is a Hamiltonian isotopy satisfying the conditions in Section \ref{subsection barcode vs categorical entropy}.

On the other hand, one can ask a similar question for $h_{cat}(\phi)$.
More precisely, we ask whether the following equality holds or not:
\[h_{cat}(\phi) = \inf_{G, \varphi_0} h_{bar}\left(\phi;\varphi_0(G),G)\right).\]

\bibliographystyle{amsalpha}
\bibliography{entropy2}
\end{document}